%
%
%
%
%

%
\RequirePackage{fix-cm}
\documentclass[smallextended]{svjour3}       
\smartqed  
\usepackage{graphicx}
\usepackage{amsmath, ntheorem}
\usepackage{amssymb}
\usepackage{stackrel}
\usepackage{marvosym, enumerate}
\usepackage{amstext, amscd, latexsym}
\usepackage{amsfonts, ragged2e}
\usepackage{mathrsfs, pgfplots, enumerate, setspace}
\usepackage{subfigure}
\usepackage{cases}
\usepackage{epstopdf}
\usepackage{booktabs}
\usepackage{tabularx}
\usepackage[ruled,linesnumbered]{algorithm2e}
\usepackage{float} 
\usepackage[font={bf},textfont=md,labelfont={footnotesize},labelformat={default}]{caption}
\numberwithin{equation}{section}
\smartqed
\usepackage{cite}
\usepackage[colorlinks,linkcolor=blue,anchorcolor=blue,citecolor=blue]{hyperref}
\usepackage{amstext, graphicx, amscd, latexsym, amssymb}
\usepackage{amsfonts, ragged2e}
\usepackage{pgfplots, setspace}
\usepackage{geometry}
\usepackage{latexsym,bm}
\usepackage{float}
\usepackage{threeparttable}
\usepackage{array}
\usepackage{booktabs}
\usepackage{makecell}
\usepackage[justification=centering,labelfont=bf]{caption}
\usepackage{etoolbox}
\usepackage[figuresright]{rotating}
\setlength{\abovecaptionskip}{0pt}
\setlength{\belowcaptionskip}{11pt}

\itemsep=0cm \arraycolsep=0pt

%
\begin{document}

\title{Obtaining properly Pareto optimal solutions of multiobjective optimization problems via the branch and bound method}

\titlerunning{}        

\author{Weitian Wu$^1$ \and Xinmin Yang$^2$ }


\institute{ W.T. Wu \at School of Sciences, Ningbo University of Technology 315211, China\\
                    \email{weitianwu@nbut.edu.cn}\\
           \Letter X.M. Yang \at National Center for Applied Mathematics of Chongqing 401331, China\\
           School of Mathematical Sciences, Chongqing Normal University, Chongqing 401331, China\\
              xmyang@cqnu.edu.cn}

\date{Received: date / Accepted: date}

\maketitle

\begin{abstract}

In multiobjective optimization, most branch and bound algorithms provide the decision maker with the whole Pareto front, and then decision maker could select a single solution finally. However, if the number of objectives is large, the number of candidate solutions may be also large, and it may be difficult for the decision maker to select the most interesting solution. As we argue in this paper, the most interesting solutions are the ones whose trade-offs are bounded. These solutions are usually known as the properly Pareto optimal solutions. We propose a branch-and-bound-based algorithm to provide the decision maker with so-called $\epsilon$-properly Pareto optimal solutions. The discarding test of the algorithm adopts a dominance relation induced by a convex polyhedral cone instead of the common used Pareto dominance relation. In this way, the proposed algorithm excludes the subboxes which do not contain $\epsilon$-properly Pareto optimal solution from further exploration. We establish the global convergence results of the proposed algorithm. Finally, the algorithm is applied to benchmark problems as well as to two real-world optimization problems.

\keywords{Multiobjective optimization \and Global optimization \and Branch and bound algorithm \and Properly Pareto optimal solution}
\subclass{90C26\and 90C29\and 90C30}
\end{abstract}

\section{Introduction}
Many real-world optimization problems involve multiple objectives that require simultaneous consideration. Researchers refer to such problems as multiobjective optimization problems (MOPs). Since objectives are conflicting, it is impossible to find a single solution that is optimal for all objectives. Instead, a number of Pareto optimal solutions can be identified, characterized by the fact that improving one objective can only be achieved at the expense of worsening at least one other objective. Therefore, none of the Pareto optimal solutions can be said to be inferior when compared to others. The idea of solving an MOP can be understood as helping the decision maker in making a trade-off between multiple objectives and in finding a Pareto optimal solution that pleases him/her the most.

The a posteriori methods attempt to generate a well distributed set of representatives of the whole Pareto optimal solution set. Then the decision maker has to look at this potentially set of alternative solutions and make a choice. Most branch and bound algorithms for MOPs \cite{ref10,ref11,ref26,ref42,ref43,ref55,ref56,ref57} can be classified as a posteriori. However, their solution processes are considered resource-intensive and time-consuming, because the solutions with the required accuracy can only be obtained by continuously subdividing the variable space. Furthermore, if the number of objectives is large, not only the visualization of the high-dimensional Pareto front is not intuitive, but also the number of representative solutions may be huge. In this case, even if a global perspective of the problem is provided, it is very difficult to make a choice for decision maker.

To address the above issue, Wu and Yang \cite{ref44} proposed a reference point-based branch and bound algorithm which can provide the regions of interest that obey the decision maker's preferences, instead of the whole Pareto front. The participation of preferences helps the algorithm to guide the search towards the regions of interest and avoiding computational resources being wasted on exploring undesirable regions. Thus, the algorithm not only requires less computation cost, but also effectively alleviates the decision maker's selection pressure when solving many-objective optimization problems. However, the algorithm requires the assumption that the decision maker has informed preferences, i.e., the decision maker has sufficient a priori knowledge to specify preferences. This assumption does not hold in many situations \cite{ref21}, which makes the algorithm lose its advantages.

We note that some solutions are naturally preferred for the decision maker, even if no informed preference can be proposed. A classical type of the naturally preferred solutions is the properly Pareto optimal solutions. Kuhn and Tucker \cite{ref18} introduced the properly Pareto optimal solutions to avoid providing decision makers with some inappropriate Pareto optimal solutions whose trade-offs do not essentially differ from a weakly Pareto optimal solution. Therefore, from the view of trade-offs, the properly Pareto optimal solutions are more attractive to the decision maker than the improperly ones because their trade-offs are bounded. However, the methods available so far for finding the properly Pareto optimal solutions are almost based on the scalarization methods\cite{ref48,ref49,ref50,ref51,ref52}, so they cannot get rid of some limitations of scalarization methods. For instance, these methods require a set of predetermined parameters to control the distribution of the obtained solutions, however in some problems, choosing almost all parameters leads to unbounded problems \cite{ref53}. Furthermore, it is a frequent observation in some cases that even for convex problems, an evenly distributed set of parameters fails to produce an even distribution of solutions \cite{ref54}.

In this paper, we propose a branch-and-bound-based algorithm to approximate so-called $\epsilon$-properly Pareto optimal solutions \cite{ref38} for nonconvex multiobjective optimization problems whose objective functions have the known Lipschitz constants. The discarding test of the proposed algorithm adopts a new dominance relation induced by a convex polyhedral ordering cone instead of the Pareto dominance relation. We prove the discarding test with the new dominance relation is able to exclude the subboxes which do not contain $\epsilon$-properly Pareto optimal solutions, and further establish that the global convergence of the algorithm. Finally, the proposed algorithm is applied to two- and five-objective engineering constrained optimization problems as well as to benchmark problems. The proposed algorithm has the following capabilities:
\begin{itemize}
  \item Informed preferences of the decision maker are not required;
  \item The algorithm approximates the $\epsilon$-properly Pareto optimal solution set, instead of the Pareto optimal set;
  \item The global convergence of the algorithm can be proved;
  \item The algorithm is applicable to three or more objectives, and linear or nonlinear constraints.
\end{itemize}

The rest of this paper is organized as follows. In Section 2, we introduce the basic concepts and notations of multiobjective optimization. The new branch and bound algorithm and its theoretical analysis is described in Section 3. Section 4 is devoted to some numerical results.

\section{Basics of multiobjective optimization}
In this section we introduce the basic concepts which we need for the new algorithm. A multiobjective optimization problem can be written as follows:
\begin{align}\label{MOP}
\min\limits_{x\in\Omega}\quad F(x)=(f_1(x),\ldots ,f_m(x))^T
\end{align}
with
\begin{align*}
\Omega=\{x\in\mathbb{R}^n:g_j(x)\geq0,\;j=0,\ldots,p,\;\underline{x}_k\leq x_k\leq \overline{x}_k,\;k=0,\ldots,n\},
\end{align*}
where $f_i:\mathbb{R}^n\rightarrow \mathbb{R}$ ($i=1,\ldots,m$) are Lipschitz continuous, and $g_j:\mathbb{R}^n\rightarrow \mathbb{R}$ ($j=0,\ldots,p$) are continuous. If we allow $j=0$, the set $\Omega$ is referred to as a box constraint. In this case, we call $\Omega$ a \emph{box} with the midpoint $m(\Omega)=(\frac{\underline{x}_1+\overline{x}_1}{2},\ldots,\frac{\underline{x}_n+\overline{x}_n}{2})^T$ and the width $\omega(\Omega)=(\overline{x}_1-\underline{x}_1,\ldots,\overline{x}_n-\underline{x}_n)^T$. The diameter of $\Omega$ is denoted by $\|\omega(\Omega)\|$. For a feasible solution $x\in\Omega$, the objective vector $F(x)\in\mathbb{R}^m$ is said to be the image of $x$, while $x$ is called the preimage of $F(x)$.

The concept of Pareto dominance relation between two solutions $x^1,x^2\in\Omega$ can be defined as follows:

\begin{align*}
x^1\;dominates\;x^2~\Longleftrightarrow\;x^1\leq x^2\;\Longleftrightarrow\;F(x^2)-F(x^1)\in \mathbb{R}^m_+\backslash\{0\}.
\end{align*}
We can define similar terms in the objective space. A nonempty set $N\subseteq\mathbb{R}^m$ is called a \emph{nondominated set} if for any $z^1,z^2\in N$ we have $z^1\nleq z^2$ and $z^2\nleq z^1$.

A point $x^*\in\Omega$ is said to be a \emph{Pareto optimal solution} for (MOP) if there does not exist any $x\in\Omega$ such that $F(x)\leq F(x^*)$. The set of all Pareto solutions is called the \emph{Pareto set} and is denoted by $X^*$. The image of Pareto set under the mapping $F$ is called the \emph{Pareto front}.

The aim of an approximation algorithm is used to found an $\varepsilon$-efficient solution of problem (2.1), which is defined next. Let $e$ denote the $m$-dimensional all-ones vector $(1,\ldots,1)^T\in\mathbb{R}^m$.

\begin{definition}\cite{ref47}
Let $\varepsilon\geq0$ be given. A point $\bar{x}\in \Omega$ is an $\varepsilon$-efficient solution of problem (2.1) if there does not exist another $x\in \Omega$ with $F(x)\leq F(\bar{x})-\varepsilon e$.
\end{definition}

We use $d(a,b)=\|a-b\|$ to quantify the distance between two points $a$ and $b$, where $\|\cdot\|$ denotes the Euclidean norm. The distance between the point $a$ and a nonempty finite set $B$ is defined as $d(a,B):=\min_{b\in B}\|a-b\|.$ Let $A$ be another non-empty finite set, we define the Hausdorff distance between $A$ and $B$ by

\begin{align*}
d_H(A,B):=\max\{d_h(A,B),d_h(B,A)\},
\end{align*}
where $d_h(A,B)$ is the directed Hausdorff distance from $A$ to $B$, defined by

\begin{align*}
d_h(A,B):=\max_{a\in A}\{\min_{b\in B}\|a-b\|\}.
\end{align*}

\section{Proposed branch and bound algorithm}
Branch and bound algorithms \cite{ref10,ref11,ref26,ref42,ref43,ref55,ref56,ref57} have been used as the a posteriori methods to solve multiobjective optimization problems. By means of a tree search, a branch and bound algorithm systematically searches for an approximation of the entire Pareto set. The basic branch and bound algorithm for MOPs has been proposed by Fern{\'a}ndez and T{\'o}th \cite{ref11} (see Algorithm 1). The solution process consists of three components:
\begin{itemize}
  \item \emph{branching}: subboxes are bisected perpendicularly to the direction of maximum width;
  \item \emph{bounding}: the lower and upper bounds for subboxes are calculated;
  \item \emph{pruning}: the subboxes that are provably suboptimal are excluded from exploration.
\end{itemize}

\begin{algorithm}[H]\label{alg1}
  \SetKwInOut{Input}{Input}\SetKwInOut{Output}{Output}
  \Input{problem (2.1), termination criterion;}
  \Output{$\mathcal{B}_{k}$, $\mathcal{U}^{nds}$;}
  \BlankLine
  $\mathcal{B}_0\leftarrow \Omega$, $\mathcal{U}^{nds}\leftarrow \emptyset$, $k=0$\;
  \While{termination criterion is not satisfied}
  {$\mathcal{B}_{k+1}\leftarrow \emptyset$\;
  \While{$\mathcal{B}_k\neq\emptyset$}{
  Select $B\in\mathcal{B}_k$ and remove it from $\mathcal{B}_k$\;
  $B_1,B_2\longleftarrow$ Bisect $B$ perpendicularly to the direction of maximum width\;
  \For{$i=1,2$}
  {Calculate the lower bound $l(B_i)$ and upper bound $u(B_i)$ for $B_i$\;
   \If{$B_i$ can not be discarded}{
        Update $\mathcal{U}^{nds}$ by $u(B_i)$ and store $B_i$ into $\mathcal{B}_{k+1}$\;}}
  }
  $k\leftarrow k+1$.}
  \caption{A basic branch and bound algorithm for MOPs}
\end{algorithm}
\bigskip

The source of the upper bounds is usually the image of the midpoints or the vertexes of subboxes. The approaches for the lower bounds proposed so far in the literature include the natural interval extension \cite{ref11,ref25}, the Lipschitz bound \cite{ref42,ref43} and the $\alpha$BB method \cite{ref26}, and the resulting lower bound $l=(l_1,\ldots,l_m)^T$ for a subbox $B$ satisfies the following condition:
\begin{align}
  l\leq F(x),\quad  x\in B.\label{IE:3.1}
\end{align}
Numerical experiments show that there is no big difference among the three bounding approaches. However, the latter two calculate the maximal error between the lower bounds and optimal values.

The pruning can be achieved by \emph{discarding tests}. The discarding test limits the tree search and thus avoids exhaustive enumeration. A common type of discarding test is based on the Pareto dominance relation:

\emph{A subbox will be discarded if there exists a feasible objective vector such that the objective vector dominates the lower bound of the subbox.}

It is easy to see that the subbox is removed because it does not contain any Pareto optimal solutions.

\subsection{The new discarding test}

As discussed earlier, we aim to approximate the properly Pareto optimal solutions. Here we consider the $\epsilon$-properly Pareto optimal solution proposed by Wierzbicki \cite{ref38}:


\begin{definition}\label{de:1}
The solution $x^*\in\Omega$ is said to be the $\epsilon$-properly Pareto optimal solution of problem (2.1), if
\begin{align*}
(F(x^*)-\mathbb{R}^m_{\epsilon}\backslash\{0\})\cap F(\Omega)=\emptyset,
\end{align*}
where $\mathbb{R}^m_{\epsilon}=\{y\in\mathbb{R}^m:\min_{i=1,\ldots,m} (1-\epsilon)y_i+\epsilon\sum_{i=1}^{m}y_i\geq 0\}$, $0\leq\epsilon\leq1$.
\end{definition}

Figure \ref{fig2} depicts the $\epsilon$-properly Pareto optimal solution of the bi-objective optimization problem. The $\epsilon$-properly Pareto optimal solution can be obtained by intersecting the feasible region with a blunt cone $\mathbb{R}^m_{\epsilon}$. Compared to the Pareto optimal solution, the $\epsilon$-properly Pareto optimal solution uses a larger set $\mathbb{R}^m_{\epsilon}$ instead of $\mathbb{R}^m_+$, so the $\epsilon$-properly Pareto optimal solution set is contained in the Pareto optimal solution set. Furthermore, an interesting aspect of $\epsilon$-properly Pareto optimal solutions is that the trade-offs are bounded by $\epsilon$ and $1/\epsilon$ \cite{ref23,ref59}.


\begin{figure}[h]%
\centering
\includegraphics[width=0.5\textwidth]{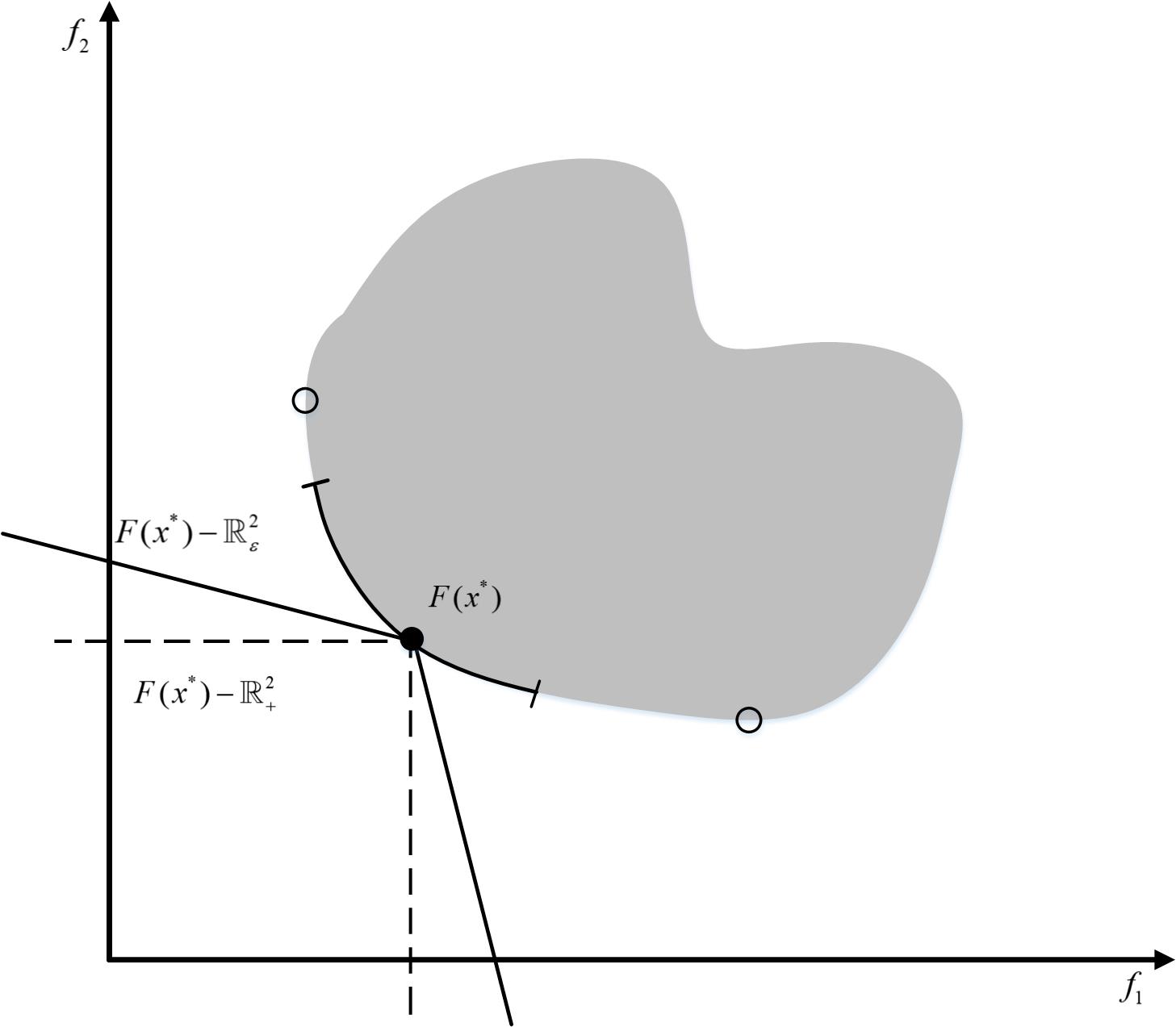}
\caption{$\epsilon$-properly Pareto optimal solution}\label{fig2}
\end{figure}

For $0\leq\epsilon\leq1$, now we define a linear mapping $\mathcal{T}_{\epsilon}:\mathbb{R}^m\rightarrow\mathbb{R}^m$,
\begin{align*}
  \mathcal{T}_{\epsilon}(y):=\begin{bmatrix}
1\quad & \quad \epsilon \quad &\quad \cdots\quad&\quad \epsilon\\
\epsilon\quad&\quad1\quad&\quad\cdots\quad&\quad\epsilon\\
\vdots\quad & \quad\vdots\quad &\quad \ddots\quad &\quad \vdots\\
\epsilon\quad&\quad \epsilon\quad &\quad \cdots\quad &\quad 1
\end{bmatrix}\cdot y.
\end{align*}
Using this notation, we define a set
\begin{align*}
  \mathcal{C}_{\epsilon}:=\{y\in\mathbb{R}^m:\mathcal{T}_{\epsilon}(y)\geqq 0\}.
\end{align*}
By Definition 2.1.7 in \cite{ref58}, the set $\mathcal{C}_{\epsilon}$ is a convex polyhedral ordering cone, and thus the corresponding cone order $\leq_{\epsilon}$ on $\mathbb{R}^m$ by
\begin{align*}
  y^1\leq_{\epsilon}y^2\Longleftrightarrow y^2-y^1\in\mathcal{C}_{\epsilon}\backslash\{0\}.
\end{align*}

The following lemmas hold for $\leq_{\epsilon}$:
\begin{lemma}{\rm \cite{ref13}}\label{le:1}
For $y^1,y^2\in\mathbb{R}^m$, we have $y^1\leq_{\epsilon}y^2$ if and only if $\mathcal{T}_{\epsilon}(y^1)\leq\mathcal{T}_{\epsilon}(y^2)$.
\end{lemma}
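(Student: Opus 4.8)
The plan is to prove the biconditional in Lemma \ref{le:1} directly from the definition of the cone order $\leq_\epsilon$ and the linearity of $\mathcal{T}_\epsilon$, reducing the claim about the cone $\mathcal{C}_\epsilon$ to a claim about the nonnegative orthant $\mathbb{R}^m_+$. The statement $y^1\leq_\epsilon y^2$ means, by definition, $y^2-y^1\in\mathcal{C}_\epsilon\backslash\{0\}$, and $y \in \mathcal{C}_\epsilon$ means $\mathcal{T}_\epsilon(y)\geqq 0$. So the first step is to unwind both sides: the left side is $\mathcal{T}_\epsilon(y^2-y^1)\geqq 0$ together with $y^2-y^1\neq 0$; the right side, $\mathcal{T}_\epsilon(y^1)\leq\mathcal{T}_\epsilon(y^2)$, is by definition of the Pareto relation $\leq$ exactly $\mathcal{T}_\epsilon(y^2)-\mathcal{T}_\epsilon(y^1)\in\mathbb{R}^m_+\backslash\{0\}$, i.e. $\mathcal{T}_\epsilon(y^2)-\mathcal{T}_\epsilon(y^1)\geqq 0$ and $\mathcal{T}_\epsilon(y^2)-\mathcal{T}_\epsilon(y^1)\neq 0$.

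The second step is to use linearity of $\mathcal{T}_\epsilon$ to identify $\mathcal{T}_\epsilon(y^2)-\mathcal{T}_\epsilon(y^1)=\mathcal{T}_\epsilon(y^2-y^1)$. After this identification, the inequality parts of the two sides coincide verbatim: $\mathcal{T}_\epsilon(y^2-y^1)\geqq 0$. It remains only to match the ``$\neq 0$'' parts, i.e. to check that $y^2-y^1\neq 0$ if and only if $\mathcal{T}_\epsilon(y^2-y^1)\neq 0$. This is where I would invoke invertibility of the matrix defining $\mathcal{T}_\epsilon$: for $0\le\epsilon<1$ (and, with a separate remark, the degenerate case $\epsilon=1$, where the matrix is $e e^T$ and one should note what the intended convention is) the matrix $(1-\epsilon)I+\epsilon e e^T$ has eigenvalues $1-\epsilon$ (multiplicity $m-1$) and $1+(m-1)\epsilon$ (multiplicity $1$), both positive, hence $\mathcal{T}_\epsilon$ is a linear bijection; in particular $\mathcal{T}_\epsilon(v)=0\iff v=0$. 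Setting $v=y^2-y^1$ closes the equivalence of the ``$\neq 0$'' clauses, and combining with the already-established equivalence of the inequality clauses gives the lemma.

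I expect the routine unwinding of definitions and the linearity step to be immediate; the only genuine point requiring care is the handling of the strictness/nonzero condition via invertibility of $\mathcal{T}_\epsilon$, and in particular the boundary value $\epsilon=1$ where $\mathcal{T}_\epsilon$ fails to be injective. If the paper's convention restricts the cone construction to $0\le\epsilon<1$ (or if $\mathcal{C}_\epsilon$ for $\epsilon=1$ is understood with the appropriate closure), the argument above is complete as stated; otherwise one flags that $\epsilon=1$ must be excluded from the lemma. Since the reference \cite{ref13} is cited for this lemma, I would keep the proof to the two or three lines sketched here — unwind $\leq_\epsilon$ and $\leq$ to orthant membership, use linearity, use invertibility for the nonzero part — rather than reproducing a longer argument.
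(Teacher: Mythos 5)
Your proof follows essentially the same route as the paper's: unwind $\leq_{\epsilon}$ to membership of $y^2-y^1$ in $\mathcal{C}_{\epsilon}\backslash\{0\}$, unwind $\leq$ to membership in $\mathbb{R}^m_+\backslash\{0\}$, and identify the two via the linearity of $\mathcal{T}_{\epsilon}$. Your extra step matching the two ``$\neq 0$'' clauses via invertibility of $(1-\epsilon)I+\epsilon ee^T$ is a genuine improvement: the paper's one-line chain of equivalences silently assumes injectivity of $\mathcal{T}_{\epsilon}$, and your observation that this fails at $\epsilon=1$ (where a nonzero $v$ with $\sum_i v_i=0$ gives $y^1\leq_{\epsilon}y^2$ but not $\mathcal{T}_{\epsilon}(y^1)\leq\mathcal{T}_{\epsilon}(y^2)$) is a real boundary case that the paper, which permits $\epsilon\in[0,1]$ throughout, does not address.
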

\begin{proof}
we have
\begin{align*}
  y^1\leq_{\epsilon}y^2\Leftrightarrow y^2-y^1\in\mathcal{C}_{\epsilon}\backslash\{0\}\Leftrightarrow \mathcal{T}_{\epsilon}(y^2-y^1)\geq 0\Leftrightarrow \mathcal{T}_{\epsilon}(y^2)\geq \mathcal{T}_{\epsilon}(y^1).
\end{align*}
by the definition of $\leq_{\epsilon}$ and $\mathcal{C}_{\epsilon}$ and by the linearity of $\mathcal{T}_{\epsilon}$. \qed
\end{proof}

\begin{lemma}\label{le:2}
The cone order $\leq_{\epsilon}$ is a strict partial order.
\end{lemma}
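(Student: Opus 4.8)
The plan is to check the two defining properties of a strict partial order for $\leq_{\epsilon}$, namely irreflexivity and transitivity; asymmetry then follows automatically, since $y^1\leq_\epsilon y^2$ and $y^2\leq_\epsilon y^1$ would give $y^1\leq_\epsilon y^1$. Irreflexivity is immediate: for any $y\in\mathbb{R}^m$ the relation $y\leq_{\epsilon}y$ would require $y-y=0\in\mathcal{C}_{\epsilon}\backslash\{0\}$, which is impossible because $0$ has been deleted from the cone by definition. So the work is in establishing transitivity.

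For transitivity, suppose $y^1\leq_{\epsilon}y^2$ and $y^2\leq_{\epsilon}y^3$, i.e. $v_1:=y^2-y^1\in\mathcal{C}_{\epsilon}\backslash\{0\}$ and $v_2:=y^3-y^2\in\mathcal{C}_{\epsilon}\backslash\{0\}$. Then $y^3-y^1=v_1+v_2$, and since $\mathcal{C}_{\epsilon}$ is a convex cone it is closed under addition, so $v_1+v_2\in\mathcal{C}_{\epsilon}$; the only thing left to rule out is $v_1+v_2=0$. Here I would use that $\mathcal{C}_{\epsilon}$ is pointed, $\mathcal{C}_{\epsilon}\cap(-\mathcal{C}_{\epsilon})=\{0\}$: the matrix of $\mathcal{T}_{\epsilon}$ is $(1-\epsilon)I+\epsilon ee^{T}$, whose determinant equals $(1-\epsilon)^{m-1}\bigl(1+(m-1)\epsilon\bigr)$ and is nonzero for $0\leq\epsilon<1$, so $\mathcal{T}_{\epsilon}$ is a linear isomorphism and $\mathcal{C}_{\epsilon}=\mathcal{T}_{\epsilon}^{-1}(\mathbb{R}^m_+)$ is the isomorphic image of the pointed cone $\mathbb{R}^m_+$, hence pointed. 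If $v_1+v_2=0$ then $v_1=-v_2\in\mathcal{C}_{\epsilon}\cap(-\mathcal{C}_{\epsilon})=\{0\}$, contradicting $v_1\neq0$. Therefore $y^3-y^1\in\mathcal{C}_{\epsilon}\backslash\{0\}$, that is $y^1\leq_{\epsilon}y^3$. A shorter alternative is to invoke Lemma \ref{le:1}: it identifies $y^1\leq_{\epsilon}y^2$ with $\mathcal{T}_{\epsilon}(y^1)\leq\mathcal{T}_{\epsilon}(y^2)$, so $\leq_{\epsilon}$ is the pullback under the (injective) map $\mathcal{T}_{\epsilon}$ of the componentwise Pareto order on $\mathbb{R}^m$; since that order is a strict partial order, so is its pullback.

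The argument has no deep obstacle; the one point that needs care is the ``$\backslash\{0\}$'' in the definition of $\leq_{\epsilon}$, which is exactly what makes transitivity hinge on $\mathcal{C}_{\epsilon}$ being pointed, equivalently on $\mathcal{T}_{\epsilon}$ being nonsingular. This in turn forces $\epsilon\neq1$: at $\epsilon=1$ the matrix $(1-\epsilon)I+\epsilon ee^{T}$ becomes singular, $\mathcal{C}_1$ degenerates to the half-space $\{y:\sum_i y_i\geq0\}$, and transitivity genuinely fails, so the statement should be read for $0\leq\epsilon<1$. Everything else is routine unwinding of the definitions of $\mathcal{C}_{\epsilon}$, $\leq_{\epsilon}$, and of Lemma \ref{le:1}.
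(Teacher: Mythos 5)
Your proof is correct, and your primary argument is genuinely different from the paper's. The paper proves transitivity by transferring everything through Lemma \ref{le:1}: from $y^1\leq_{\epsilon}y^2$ and $y^2\leq_{\epsilon}y^3$ it gets $\mathcal{T}_{\epsilon}(y^1)\leq\mathcal{T}_{\epsilon}(y^2)\leq\mathcal{T}_{\epsilon}(y^3)$ and invokes transitivity of the componentwise order $\leq$ — exactly the ``shorter alternative'' you sketch at the end. Your main route instead works directly with the cone: closure of $\mathcal{C}_{\epsilon}$ under addition gives $y^3-y^1\in\mathcal{C}_{\epsilon}$, and pointedness (established via $\det\bigl((1-\epsilon)I+\epsilon ee^{T}\bigr)=(1-\epsilon)^{m-1}(1+(m-1)\epsilon)\neq0$, so $\mathcal{C}_{\epsilon}=\mathcal{T}_{\epsilon}^{-1}(\mathbb{R}^m_+)$ is a linear image of a pointed cone) rules out $y^3-y^1=0$. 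What your version buys is that it isolates the one nontrivial point — the ``$\backslash\{0\}$'' in the definition of $\leq_{\epsilon}$ forces you to know the cone is pointed — which the paper's proof hides inside Lemma \ref{le:1} (that lemma's equivalence itself silently uses injectivity of $\mathcal{T}_{\epsilon}$). Your observation that the statement fails at $\epsilon=1$ is a genuine catch: there $\mathcal{C}_1$ degenerates to the half-space $\{y:\sum_i y_i\geq0\}$, one can take $v_1=(1,-1,0,\ldots,0)=-v_2$ with both $v_1,v_2\in\mathcal{C}_1\backslash\{0\}$, and transitivity together with irreflexivity is violated; the paper states the parameter range as $0\leq\epsilon\leq1$ throughout, so strictly speaking the lemma (and Lemma \ref{le:1}) should be restricted to $\epsilon\in[0,1)$. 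The paper's choice $\epsilon=0.75$ in the experiments is unaffected.
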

\begin{proof}
By Definition 2.3.1 in \cite{ref58}, we would like to show that $\leq_{\epsilon}$ is irreflexive and transitive. First, it is easy to see that for all $y\in \mathbb{R}^m$, $y\nleq_{\epsilon} y$, i.e., $\leq_{\epsilon}$ is irreflexive. Next we will prove $\leq_{\epsilon}$ is transitive. For $y^1,y^2,y^3\in\mathbb{R}^m$, assume $y^1\leq_{\epsilon} y^2$ and $y^2\leq_{\epsilon} y^3$. By Lemma 1, we have
$\mathcal{T}_{\epsilon}(y^1)\leq\mathcal{T}_{\epsilon}(y^2)$ and $\mathcal{T}_{\epsilon}(y^2)\leq\mathcal{T}_{\epsilon}(y^3)$. Due to the transitivity of $\leq$, we have $\mathcal{T}_{\epsilon}(y^1)\leq\mathcal{T}_{\epsilon}(y^3)$, meaning that $y^1\leq_{\epsilon} y^3$. \qed
\end{proof}

\begin{lemma}{\rm \cite{ref22}}\label{le:3}
For $y^1,y^2\in\mathbb{R}^m$, if $y^1\leq y^2$, we have $y^1\leq_{\epsilon}y^2$.
\end{lemma}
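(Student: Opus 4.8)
The natural approach is to unwind both sides of the implication into statements about the single vector $z:=y^2-y^1$ and then verify a componentwise inequality. By the definition of the Pareto order, the hypothesis $y^1\leq y^2$ says precisely that $z\in\mathbb{R}^m_+\backslash\{0\}$, i.e. $z\geqq 0$ and $z\neq 0$. By the definition of $\leq_\epsilon$ and of $\mathcal{C}_\epsilon$, the desired conclusion $y^1\leq_\epsilon y^2$ is equivalent to $z\in\mathcal{C}_\epsilon\backslash\{0\}$, i.e. $\mathcal{T}_\epsilon(z)\geqq 0$ together with $z\neq 0$. Since $z\neq 0$ is already part of the hypothesis, the whole lemma reduces to showing the single implication $z\geqq 0\ \Rightarrow\ \mathcal{T}_\epsilon(z)\geqq 0$; equivalently, that the nonnegative orthant $\mathbb{R}^m_+$ is contained in the cone $\mathcal{C}_\epsilon$.

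For this I would write out the $i$-th coordinate of $\mathcal{T}_\epsilon(z)$ explicitly from the matrix defining $\mathcal{T}_\epsilon$, namely
\begin{align*}
\big(\mathcal{T}_\epsilon(z)\big)_i = z_i+\epsilon\sum_{k\neq i}z_k = (1-\epsilon)z_i+\epsilon\sum_{k=1}^{m}z_k,\qquad i=1,\ldots,m.
\end{align*}
Using $0\leq\epsilon\leq 1$ we have $1-\epsilon\geq 0$ and $\epsilon\geq 0$, and using $z\geqq 0$ we have $z_i\geq 0$ for every $i$ and $\sum_{k=1}^m z_k\geq 0$; hence each coordinate $\big(\mathcal{T}_\epsilon(z)\big)_i$ is a sum of two nonnegative terms and is therefore nonnegative. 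This gives $\mathcal{T}_\epsilon(z)\geqq 0$, i.e. $z\in\mathcal{C}_\epsilon$, and combined with $z\neq 0$ we obtain $z\in\mathcal{C}_\epsilon\backslash\{0\}$, that is, $y^1\leq_\epsilon y^2$.

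There is essentially no hard step here; the only points requiring a moment's care are bookkeeping ones: first, keeping track of the ``$\backslash\{0\}$'' parts on both orders so that the nonzero hypothesis is transported correctly (it is, trivially, since $y^2-y^1$ is the same vector in both statements); and second, making sure the computation is valid at the endpoints $\epsilon=0$ (where $\mathcal{T}_\epsilon$ is the identity and $\leq_\epsilon$ is just $\leq$) and $\epsilon=1$ (where $\mathcal{T}_\epsilon(z)$ has all coordinates equal to $\sum_k z_k$), both of which are covered by the same inequality $(1-\epsilon)z_i+\epsilon\sum_k z_k\geq 0$. Alternatively, one could phrase the argument through Lemma~\ref{le:1}, deducing $\mathcal{T}_\epsilon(y^1)\leq\mathcal{T}_\epsilon(y^2)$ from $\mathcal{T}_\epsilon(y^2-y^1)\in\mathbb{R}^m_+\backslash\{0\}$, but this would additionally require checking $\mathcal{T}_\epsilon(z)\neq 0$, so the direct containment $\mathbb{R}^m_+\subseteq\mathcal{C}_\epsilon$ is the more economical route.
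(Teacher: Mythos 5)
Your proof is correct and follows essentially the same route as the paper's: reduce the claim to the cone containment $\mathbb{R}^m_+\backslash\{0\}\subseteq\mathcal{C}_{\epsilon}\backslash\{0\}$ and conclude from the definition of $\leq_{\epsilon}$. The only difference is that the paper asserts this containment without verification, whereas you check it componentwise via $\bigl(\mathcal{T}_{\epsilon}(z)\bigr)_i=(1-\epsilon)z_i+\epsilon\sum_{k=1}^m z_k\geq 0$ for $z\geqq 0$ and $0\leq\epsilon\leq 1$ — a detail worth having, and your observation that the direct containment avoids having to check $\mathcal{T}_{\epsilon}(z)\neq 0$ is also apt.
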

\begin{proof}
  According to the Pareto dominance, we know $y^2-y^1\in\mathbb{R}^m_+\backslash\{0\}\subseteq\mathcal{C}_{\epsilon}\backslash\{0\}$, following $\mathcal{T}_{\epsilon}(y^2-y^1)\geq0$. By the linearity of $\mathcal{T}_{\epsilon}$ and Lemma 1, we have $y^1\leq_{\epsilon}y^2$. \qed
\end{proof}

According to the strict partial order $\leq_{\epsilon}$, the $\epsilon$-dominance relation between $x^1,x^2\in\Omega$ can be defined as follows:
\begin{align*}
x^1~\epsilon\hbox{-dominates}~x^2\Longleftrightarrow F(x^1)\leq_\epsilon F(x^2)\Longleftrightarrow\mathcal{T}_{\epsilon}(F(x^1))\leq\mathcal{T}_{\epsilon}(F(x^2)).
\end{align*}
 A nonempty set $\mathcal{U}\subseteq\mathbb{R}^m$ is called non-$\epsilon$-dominated set, if for any $y^1\in\mathcal{U}$, there does not exist $y^2\in\mathcal{U}$ such that $y^2\leq_\epsilon y^1$.

The following theorem discuss the relationship between $\epsilon$-dominance and $\epsilon$-properly Pareto optimal solutions.

\begin{theorem}\label{th:1}
   For $\epsilon\in[0,1]$, let $X^*_\epsilon$ be the $\epsilon$-properly Pareto optimal solution set of problem (2.1). Then $x\in X^*_\epsilon$ if and only if there does not exist $x'\in\Omega$, such that $F(x')\leq_\epsilon F(x)$.

\end{theorem}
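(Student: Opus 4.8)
The plan is to reduce the statement to an unwinding of definitions, the key being the observation that the two ordering cones appearing in the excerpt coincide, i.e. $\mathbb{R}^m_{\epsilon}=\mathcal{C}_{\epsilon}$. First I would compute $\mathcal{T}_{\epsilon}(y)$ componentwise: for each $i\in\{1,\ldots,m\}$,
\begin{align*}
(\mathcal{T}_{\epsilon}(y))_i=y_i+\epsilon\sum_{j\neq i}y_j=(1-\epsilon)y_i+\epsilon\sum_{j=1}^{m}y_j .
\end{align*}
Hence $\mathcal{T}_{\epsilon}(y)\geqq 0$ holds if and only if $(1-\epsilon)y_i+\epsilon\sum_{j=1}^{m}y_j\geq 0$ for every $i$, which is exactly the condition $\min_{i=1,\ldots,m}\,(1-\epsilon)y_i+\epsilon\sum_{j=1}^{m}y_j\geq 0$ defining $\mathbb{R}^m_{\epsilon}$. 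Thus $\mathbb{R}^m_{\epsilon}=\mathcal{C}_{\epsilon}$, and therefore $\mathbb{R}^m_{\epsilon}\backslash\{0\}=\mathcal{C}_{\epsilon}\backslash\{0\}$.

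Next I would rewrite the optimality condition using this identity and then negate it. By Definition \ref{de:1}, $x\in X^*_\epsilon$ means $(F(x)-\mathcal{C}_{\epsilon}\backslash\{0\})\cap F(\Omega)=\emptyset$. Consequently $x\notin X^*_\epsilon$ if and only if there exists $x'\in\Omega$ with $F(x')\in F(x)-\mathcal{C}_{\epsilon}\backslash\{0\}$, i.e. $F(x)-F(x')\in\mathcal{C}_{\epsilon}\backslash\{0\}$, which by the definition of the cone order is precisely $F(x')\leq_\epsilon F(x)$. Taking the contrapositive yields the claimed equivalence: $x\in X^*_\epsilon$ if and only if there is no $x'\in\Omega$ with $F(x')\leq_\epsilon F(x)$.

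I do not expect a real obstacle here; the proof is essentially a chain of equivalences once $\mathbb{R}^m_{\epsilon}=\mathcal{C}_{\epsilon}$ is established. The only point needing a little care is the bookkeeping of the excluded origin — one must note that the ``$\backslash\{0\}$'' in Definition \ref{de:1} matches the ``$\backslash\{0\}$'' in the definition of $\leq_\epsilon$, which is legitimate precisely because the set identity $\mathbb{R}^m_{\epsilon}=\mathcal{C}_{\epsilon}$ is exact rather than holding only up to the origin. For context it may also be worth recording that $\mathcal{C}_{\epsilon}$ is a pointed convex polyhedral cone for $\epsilon\in[0,1]$, so that $\leq_\epsilon$ is indeed a strict partial order (cf. Lemma \ref{le:2}), though this is not needed for the present equivalence.
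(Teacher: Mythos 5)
Your proof is correct and takes essentially the same route as the paper: both hinge on the identity $\mathbb{R}^m_{\epsilon}=\mathcal{C}_{\epsilon}$ and then unwind Definition \ref{de:1} against the definition of $\leq_\epsilon$ (the paper phrases the two directions as separate contradiction arguments, while you give one direct chain of equivalences, which is only an organizational difference). A minor plus: you actually verify the componentwise computation showing $\mathbb{R}^m_{\epsilon}=\mathcal{C}_{\epsilon}$, a step the paper merely asserts as ``easy to prove.''
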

\begin{proof}
    ($\Rightarrow$) Assume that $x\in X^*_\epsilon$. Suppose, to the contrary, that there exists $x'\in \Omega$, such that $F(x')\leq_\epsilon F(x)$. By Lemma 1, we have
\begin{align*}
  F(x')\leq_\epsilon F(x)\Leftrightarrow \mathcal{T}_{\epsilon}(F(x'))\leq\mathcal{T}_{\epsilon}(F(x))\Leftrightarrow F(x)-F(x')\in\mathcal{C}_{\epsilon}\backslash\{0\}.
\end{align*}
Furthermore, it is easy to prove $\mathbb{R}^m_{\epsilon}=\mathcal{C}_{\epsilon}$, thus we have $F(x)\in F(x')+\mathbb{R}^m_{\epsilon}\backslash\{0\}$, which is a contradiction to the fact that $x\in X^*_\epsilon$.

($\Leftarrow$) Assume that for $x\in\Omega$, there does not exist $x'\in\Omega$, such that $F(x')\leq_\epsilon F(x)$ and to the contrary $x\notin X^*_\epsilon$. According to Definition 2, we know that there exists $\bar{x}\in\Omega$, such that $F(x)\in F(\bar{x})+\mathcal{C}_{\epsilon}\backslash\{0\}$. By Lemma 1 and the linearity of $\mathcal{T}_{\epsilon}$, we obtain a contradiction $F(\bar{x})\leq_\epsilon F(x)$. \qed
\end{proof}

According to Theorem 1, it is easy to see that the $\epsilon$-properly Pareto optimal solution set can be found from a known Pareto optimal solution set by $\epsilon$-dominance relation. On the other hand, the multiobjective branch and bound algorithms with the Pareto dominance-based discarding test can efficiently approximate the Pareto optimal solution set, thus the Pareto dominance in the discarding test can be generalized to the $\epsilon$-dominance relation to ensure that $\epsilon$-properly Pareto optimal solutions are identified. As a result, we can derive the following discarding test:

{\bf $\epsilon$-Discarding test} \emph{Let problem (2.1) be given, let $B$ be a subbox and $l(B)$ its lower bound. For $\epsilon\in[0,1]$, if there exists a feasible objective vector $u\in F(\Omega)$, such that $u\leq_\epsilon l(B)$, then $B$ will be discarded.}

Now we state the correctness of the proposed discarding test.

\begin{theorem}\label{th:2}
Let a subbox $B\in\Omega$ and its lower bound $l(B)\in\mathbb{R}^m$ be given, let $\mathcal{U}$ be a nondominated upper bound set of problem (2.1). For $\epsilon\in[0,1]$, if there exists an upper bound $u\in\mathcal{U}$ such that $u\leq_{\epsilon} l(B)$, then $B$ does not contain $\epsilon$-properly Pareto optimal solution of problem (2.1).
\end{theorem}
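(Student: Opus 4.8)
The plan is to argue by contradiction, leveraging the transitivity of $\leq_\epsilon$ (Lemma~\ref{le:2}) together with the characterization of $\epsilon$-properly Pareto optimal solutions in Theorem~\ref{th:1}. Suppose, contrary to the claim, that $B$ contains an $\epsilon$-properly Pareto optimal solution $x^*$. Since $u\in\mathcal{U}$ is an upper bound of the algorithm, it is the image $u=F(x^u)$ of some feasible point $x^u\in\Omega$ (this is how upper bounds are produced in the branch and bound scheme — from images of midpoints or vertices of subboxes). The hypothesis of the theorem gives $u\leq_\epsilon l(B)$, i.e. $F(x^u)\leq_\epsilon l(B)$.

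Next I would bridge the gap between the lower bound $l(B)$ and the actual objective value $F(x^*)$. By the bounding property~\eqref{IE:3.1}, we have $l(B)\leq F(x)$ for every $x\in B$; in particular $l(B)\leq F(x^*)$ in the Pareto sense. By Lemma~\ref{le:3}, this componentwise inequality upgrades to $l(B)\leq_\epsilon F(x^*)$ (taking care of the degenerate case $l(B)=F(x^*)$, where the strict-order statement needs the ``$\backslash\{0\}$'' to be handled — if $l(B)=F(x^*)$ one simply replaces the chain's second link by equality, and the conclusion $F(x^u)\leq_\epsilon F(x^*)$ still follows since $F(x^u)\leq_\epsilon l(B)$ already). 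Combining $F(x^u)\leq_\epsilon l(B)$ and $l(B)\leq_\epsilon F(x^*)$ via transitivity of $\leq_\epsilon$ yields $F(x^u)\leq_\epsilon F(x^*)$ with $x^u\in\Omega$.

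Finally, this contradicts Theorem~\ref{th:1}: since $x^*\in X^*_\epsilon$, there is no $x'\in\Omega$ with $F(x')\leq_\epsilon F(x^*)$, yet $x^u$ is exactly such a point. Hence $B$ contains no $\epsilon$-properly Pareto optimal solution.

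The only subtle point — and the main thing I would be careful about — is the boundary behaviour at the ``$\backslash\{0\}$'' in the definition of $\leq_\epsilon$: one must ensure that the composed relation $F(x^u)\leq_\epsilon F(x^*)$ is genuinely strict (i.e. $F(x^*)-F(x^u)\neq 0$), which it is, because $F(x^*)-F(x^u) = (F(x^*)-l(B)) + (l(B)-F(x^u))$ is a sum of a cone element and a nonzero cone element, hence nonzero and in $\mathcal{C}_\epsilon$ (the cone is closed under addition). Everything else is a routine application of the three lemmas and the lower-bound inequality; no estimates or constructions beyond these are needed.
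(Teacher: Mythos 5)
Your proof is correct and follows essentially the same route as the paper's: assume $x^*\in B$ is $\epsilon$-properly Pareto optimal, use the bounding property \eqref{IE:3.1} together with Lemma~\ref{le:3} to get $l(B)\leq_{\epsilon}F(x^*)$, chain with $u\leq_{\epsilon}l(B)$ via transitivity (Lemma~\ref{le:2}), and contradict Theorem~\ref{th:1}. Your extra care with the degenerate case $l(B)=F(x^*)$ (where Lemma~\ref{le:3}'s strict relation does not literally apply) is a small refinement the paper's own proof silently skips over, but it does not change the argument.
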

\begin{proof}
Let us assume that $x^*\in B$ is an $\epsilon$-properly Pareto solution of problem (2.1). According to (3.1), we know that $l(B)\leq F(x^*)$. By Lemmas 2 and 3, we then have $u\leq_{\epsilon} l(B)\leq_{\epsilon} F(x^*)$. Thus from Theorem 1, we obtain a contradiction $x^*\notin X^*_\epsilon$.\qed
\end{proof}

Algorithm 2 gives an implementation of the $\epsilon$-discarding test, where the flag $D$ stands for decision to discard the subbox after the algorithm. Suppose that a non-$\epsilon$-dominated upper bound set $\mathcal{U}$ is known, and if there exists an upper bound $u\in\mathcal{U}$ such that $u\leq_{\epsilon} l(B)$, then the flag $D$ for $B$ is set to 1; otherwise, the flag $D$ is set to 0.

\begin{algorithm}
\caption{\texttt{$\epsilon$-DiscardingTest($B$,$l(B)$,$\mathcal{U}$)}}\label{alg:2}
    $D\leftarrow0$\;
    \eIf{there exists $u\in\mathcal{U}$ such that $u\leq_{\epsilon}l(B)$}
    {
    $D\leftarrow1$\;
    }
    {$D\leftarrow0$.}
    \Return $D$
\end{algorithm}

\subsection{The complete algorithm}

Having the $\epsilon$-discarding test, we can present the algorithm to find $\epsilon$-properly Pareto solutions of problem (2.1). The whole algorithm is given in Algorithm 3. Another algorithmic design that needs to be mentioned is that we employ a parallel breadth first search strategy \cite{ref44} to search all subboxes simultaneously in each iteration. Therefore, in line 4 we simultaneously bisect all subboxes in the box collection $\mathcal{B}_{k-1}$ perpendicularly to the direction of maximum width to construct the current collection $\mathcal{B}_{k}$. This branching produces subboxes that all have the same diameter, so in line 5 we only need to calculate the diameter of one subbox to obtain $\omega_k$. After bisection, the feasibility test mentioned in \cite{ref11} will be used to exclude the subboxes that do not contain any feasible solutions from further exploration.

In the first for-loop, the lower bound $l(B)=(l_1,\ldots,l_m)^T$ of the subbox $B$ is calculated by
\begin{align}
l_i = f_i(m(B))-\frac{L_i}{2}\|\omega(B)\|,\quad i=1,\ldots,m,\label{E:3.2}
\end{align}
where $L_i$ is the Lipschitz constant of $f_i$. Then, all lower bounds are stored in a lower bound set $\mathcal{L}$. For each lower bound $l\in \mathcal{L}$, we compare $l$ with other lower bounds stored in $\mathcal{L}$ by $\epsilon$-dominance relation: if $l$ is $\epsilon$-dominated by any other lower bounds, then $l$ will be removed from $\mathcal{L}$; otherwise, the lower bounds that are $\epsilon$-dominated by $l$ will be removed from $\mathcal{L}$. In this way, we can find a non-$\epsilon$-dominated lower bound set $\mathcal{L}_k$ from $\mathcal{L}$. The subboxes corresponding to the lower bounds stored in $\mathcal{L}_k$ constitutes $\bar{\mathcal{B}}$.

In the second for-loop, an MOEA is used to find the upper bounds for all subboxes stored in $\bar{\mathcal{B}}$. The upper bounds and the corresponding solutions (the preimage of the upper bounds) are stored in $\mathcal{U}$ and $\mathcal{X}$, respectively. In order to reduce computational costs, we apply a mini MOEA which has a small initial population size and a few generations. If the problem also has the inequality constraints, the constrained handling approaches \cite{ref15,ref46} for MOEAs can be employed to ensure that feasible solutions and upper bounds are obtained. Thereafter, the non-$\epsilon$-dominated upper bound set $\mathcal{U}_k$ and its corresponding solution set $\mathcal{X}_k$ will be found from $\mathcal{U}$ and $\mathcal{X}$ by means of $\epsilon$-dominance relation.

In the third for-loop, the discarding flags for all subboxes are calculate by Algorithm 2 and are stored into a flag list $\mathcal{D}$. Then, according to $\mathcal{D}$, in line 22 the subboxes whose flags is equal to 1 will be removed from $\mathcal{B}_k$. It is worth noting that, in order to speed up computation, we can compute the discarding flags for all the subboxes simultaneously, i.e., parallelize the third for-loop. The first and second for-loops can also be parallelized.


\begin{algorithm}
\caption{\texttt{Algorithm to find $\epsilon$-properly Pareto solutions}}\label{alg:3}
  \SetKwInOut{Input}{Input}\SetKwInOut{Output}{Output}
  \SetKwFunction{MOEA}{MOEA}
  \SetKwFunction{DT}{$\epsilon$-DiscardingTest}
  \Input{problem (2.1), $\varepsilon>0$, $\delta>0$, $\epsilon\in[0,1]$;}
    \Output{$\mathcal{B}_{k}$, $\mathcal{U}_{k}$, $\mathcal{X}_k$;}
    $k\leftarrow1$, $\mathcal{B}_0\leftarrow\Omega$, $\omega_{k-1}\leftarrow \|\omega(\Omega)\|$, $d\leftarrow10^6$\;
  \While{$d>\varepsilon$ or $\omega_{k-1}>\delta$}{
  $\mathcal{L}\leftarrow\emptyset$, $\mathcal{U}\leftarrow\emptyset$, $\mathcal{X}\leftarrow\emptyset$\;
  Construct $\mathcal{B}_{k}$ by bisecting all boxes in $\mathcal{B}_{k-1}$\;
  $\omega_k\leftarrow \max\{\|\omega(B)\|:B\in\mathcal{B}_{k}\}$\;
  Update $\mathcal{B}_{k}$ by the feasibility test suggested in \cite{ref11}\;
  \ForEach{$B\in\mathcal{B}_{k}$}
  {
    Calculate for $B$ its lower bound $l(B)$ by equation (3.2)\;
    $\mathcal{L}\leftarrow \mathcal{L}\cup l(B)$\;
  }
  Find a non-$\epsilon$-dominated lower bound set $\mathcal{L}_k$ from $\mathcal{L}$ by the $\epsilon$-dominance\;
  Determine the box collection $\bar{\mathcal{B}}\subset\mathcal{B}_k$ according to $\mathcal{L}_k$\;
  \ForEach{$B\in\bar{\mathcal{B}}$}
  {
    $\bar{\mathcal{U}},\bar{\mathcal{X}}\leftarrow\MOEA(B)$\;
    $\mathcal{U}\leftarrow\mathcal{U}\cup \bar{\mathcal{U}}$, $\mathcal{X}\leftarrow\mathcal{X}\cup \bar{\mathcal{X}}$\;
  }

  Find a non-$\epsilon$-dominated upper bound set $\mathcal{U}_k$ and the corresponding solution set $\mathcal{X}_k$ by the $\epsilon$-dominance\;
  \ForEach{$B\in\mathcal{B}_k$}
  { $D(B)\leftarrow \DT(B,l(B),\mathcal{U}_k)$\;
  $\mathcal{D}\leftarrow \mathcal{D}\cup D(B)$\;
  }
  Update $\mathcal{B}_k$ according to the flag list $\mathcal{D}$\;
  $d\leftarrow d_h(\mathcal{U}_{k},\mathcal{L}_{k})$, $k\leftarrow k+1$.
  }
\end{algorithm}

In order to ensure the trade-offs between disparately scaled objectives can be correctly expressed, we use an objective normalization technique:
\begin{align*}
\bar{f}_i=\frac{f_i-z^*_i}{z^{nad}_i-z^*_i},
\end{align*}
where $z^{nad}=(z^{nad}_1,\ldots,z^{nad}_m)^T$ and $z^*=(z^*_1,\ldots,z^*_m)^T$ are the nadir and ideal points, respectively. There are two ways to obtain the nadir and ideal points: one is to solve each objective as a single-objective optimization problem to obtain the extreme points of the Pareto front. In this case, we should choose a global algorithm to solve the single-objective optimization problems; the second approach is to update $z^{nad}$ and $z^*$ during the solution process. The initial values of $z^{nad}$ and $z^*$ can be obtained from the natural interval extension of the objectives. The lower bound of the natural interval expansion is $z^*$, and the upper bound is $z^{nad}$. During the iterations, the minimum value of $f_i$ in the current lower bounds is used to update $z^*_i$, while the maximum value of $f_i$ in the current upper bounds is used to update $\tilde{z}^{nad}$. If a subbox can search for the maximum value of $f_i$ in the upper bounds or the minimum value of $f_i$ in the lower bounds, then it will be stored in the current box collection and not be removed by the $\epsilon$-discarding test.

\subsection{Convergence results}
We start by showing the termination of the algorithm.
\begin{theorem}\label{th:3}
Let the predefined parameters $\varepsilon>0$ and $\delta>0$ be given, Algorithm \ref{alg:3} terminates after a finite number of iterations.
\end{theorem}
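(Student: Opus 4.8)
The plan is to argue by contradiction, showing that an infinite run of Algorithm~\ref{alg:3} would force the stopping quantities $d$ and $\omega_{k-1}$ below the thresholds $\varepsilon$ and $\delta$ simultaneously, contradicting the assumption that the \texttt{while} loop never exits. The key observation is that the diameter of the subboxes shrinks geometrically: in line~4 every box in $\mathcal{B}_{k-1}$ is bisected perpendicularly to its direction of maximum width, so after $n$ such bisections each edge has been halved at least once, and hence $\omega_k\leq \tfrac{1}{\sqrt{2}}\,\omega_{k-n}$ (more crudely, $\omega_k\to 0$ as $k\to\infty$). Thus there is a finite index $k_1$ with $\omega_{k-1}\leq\delta$ for all $k\geq k_1$, and from that point on the loop can only continue because $d=d_h(\mathcal{U}_k,\mathcal{L}_k)>\varepsilon$.

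Next I would control $d=d_h(\mathcal{U}_k,\mathcal{L}_k)$ using the explicit Lipschitz lower bound (3.2). For any surviving box $B\in\mathcal{B}_k$ with midpoint $m(B)$, the image $F(m(B))$ is a feasible objective vector, so after the MOEA step and the non-$\epsilon$-domination filtering, $\mathcal{U}_k$ contains a point $u$ with $u\leq_\epsilon F(m(B))$ or $u=F(m(B))$ itself up to the filtering; in either case there is a point of $\mathcal{U}_k$ that is componentwise no larger than $F(m(B))$. Combining this with (3.2), for each $i$ we have $0\leq f_i(m(B))-l_i(B)=\tfrac{L_i}{2}\|\omega(B)\|\leq \tfrac{L_{\max}}{2}\,\omega_k$, where $L_{\max}=\max_i L_i$. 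Taking norms, every lower bound $l(B)$ lies within distance $\tfrac{\sqrt{m}\,L_{\max}}{2}\,\omega_k$ of a feasible objective vector dominated-or-equal to it, and one then pushes this through the filtering that produces $\mathcal{U}_k$ and $\mathcal{L}_k$ to conclude $d_h(\mathcal{U}_k,\mathcal{L}_k)\leq \tfrac{\sqrt{m}\,L_{\max}}{2}\,\omega_k$. Since $\omega_k\to0$, there is a finite $k_2$ with $d\leq\varepsilon$ for all $k\geq k_2$.

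Putting the two parts together: for $k\geq\max\{k_1,k_2\}+1$ the loop guard $d>\varepsilon$ or $\omega_{k-1}>\delta$ fails, so the algorithm terminates, contradicting the assumed infinite run. I expect the main obstacle to be the middle step: making rigorous the claim that after the $\epsilon$-non-domination filtering (and after the mini-MOEA, which need not return $F(m(B))$ exactly) the set $\mathcal{U}_k$ still contains a point close enough to each retained lower bound $l(B)$ to bound the \emph{directed} Hausdorff distance $d_h(\mathcal{U}_k,\mathcal{L}_k)$. The cleanest way around this is to note that $d=d_h(\mathcal{U}_k,\mathcal{L}_k)$ only uses the directed distance \emph{from} $\mathcal{U}_k$ \emph{to} $\mathcal{L}_k$: for each $u\in\mathcal{U}_k$, $u$ is the image of some feasible point in some surviving box $B$, whose lower bound $l(B)\in\mathcal{L}$ satisfies $\|u-l(B)\|\leq\tfrac{\sqrt m L_{\max}}{2}\omega_k$ by (3.2); if $l(B)$ was filtered out of $\mathcal{L}_k$ it was $\epsilon$-dominated (hence, via Lemmas~\ref{le:1}--\ref{le:3}, Pareto-dominated up to the cone, so still close) by some surviving $l'\in\mathcal{L}_k$, and a short argument with the triangle inequality and boundedness of $F(\Omega)$ on the compact $\Omega$ gives a uniform bound $d\leq c\,\omega_k$ for a constant $c$ depending only on $m$, the $L_i$, and the problem data. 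Once that inequality is in hand, termination is immediate from $\omega_k\to0$.
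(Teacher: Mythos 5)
Your proposal follows essentially the same route as the paper's proof: repeated bisection forces $\omega_k\to 0$, which disposes of the $\delta$-criterion, and the Lipschitz lower bound (3.2) gives the directed-Hausdorff estimate $d_h(\mathcal{U}_k,\mathcal{L}_k)\leq c\,\omega_k$ (the paper's constant is $\|L\|$, yours is $\tfrac{\sqrt{m}L_{\max}}{2}$; the difference is immaterial), which disposes of the $\varepsilon$-criterion. The complication you flag at the end --- that the lower bound $l(B)$ of the box containing the preimage of $u\in\mathcal{U}_k$ might have been filtered out of $\mathcal{L}_k$ --- does not actually arise: in Algorithm 3 the MOEA is run only on the boxes in $\bar{\mathcal{B}}$, which are by construction exactly those whose lower bounds survive into $\mathcal{L}_k$, so every $u\in\mathcal{U}_k$ is the image of a point in some $B$ with $l(B)\in\mathcal{L}_k$ and $\|u-l(B)\|\leq\omega_k\|L\|$; this is precisely the fact the paper invokes. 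Do note, however, that your proposed fallback for that (non-occurring) case would not go through as stated: $\epsilon$-dominance is an order relation carrying no metric information, and boundedness of $F(\Omega)$ would only yield a constant bound rather than one proportional to $\omega_k$. Since the case is excluded by the algorithm's construction, the rest of your argument stands and matches the paper's.
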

\begin{proof}
Because we divide all boxes perpendicular to a side with maximal width, $\omega_k$ decreases among the sequence of box collections, i.e., $\omega_{k} > \omega_{k+1}$ for every $k$ and converges to 0. Therefore, for a given $\delta>0$, there must exist a iteration count $\tilde{k}>0$ such that $\omega_{\tilde{k}}\leq\delta$.

Assume we use (3.2) to calculate lower bounds. According to the way $\mathcal{U}_{k}$ is constructed and the $\epsilon$-discarding test, for every $u\in\mathcal{U}_{k}$, there exists a subbox $B\in\mathcal{B}_k$, such that $F^{-1}(u)\in B$ and $l(B)\in\mathcal{L}_k$. Then, based on the Lipschitz condition, we have
\begin{align*}
d(u,\mathcal{L}_k)\leq d(u,l(B))= \|u-F(m(B)+\frac{L}{2}\omega(B))\|&\leq\|u-F(m(B)\|+\frac{1}{2}\omega_k\|L\|\\
&\leq\omega_k\|L\|
\end{align*}
where $L=(L_1,\ldots,L_m)^T$ consisting of the Lipschitz constants of objectives. Hence we have
\begin{align}
d_h(\mathcal{U}_{k},\mathcal{L}_{k}) = \omega_k\|L\|.\label{E:3.3}
\end{align}
Due to the fact that $\omega_k$ converges to 0, it follows that for a given $\varepsilon>0$, there must exist a iteration count $\bar{k}>0$ such that $d_h(\mathcal{U}_{k},\mathcal{L}_{k})\leq\varepsilon$.\qed
\end{proof}

The next theorem states all properly Pareto optimal solutions of problem (2.1) are contained in the union of subboxes generated by the algorithm.
\begin{theorem}\label{th:4}
Let $X_\epsilon^*$ be an $\epsilon$-properly Pareto solution set of problem (2.1) and $\{\mathcal{B}_k\}_{k\in\mathbb{N}}$ a sequence of box collections generated by Algorithm 3. Then, for arbitrary $k\in\mathbb{N}$ we have $X_\epsilon^*\subseteq\bigcup_{B\in\mathcal{B}_k} B$.
\end{theorem}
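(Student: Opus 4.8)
The plan is to argue by induction on $k$, showing that no $\epsilon$-properly Pareto optimal solution is ever removed by the algorithm. The base case is trivial: $\mathcal{B}_0=\Omega$, so $X_\epsilon^*\subseteq\Omega=\bigcup_{B\in\mathcal{B}_0}B$. For the induction step, suppose $X_\epsilon^*\subseteq\bigcup_{B\in\mathcal{B}_{k-1}}B$. Passing from $\mathcal{B}_{k-1}$ to $\mathcal{B}_k$, the algorithm performs exactly three operations that can delete points: (i) bisection of the boxes in $\mathcal{B}_{k-1}$, (ii) the feasibility test of \cite{ref11}, and (iii) the $\epsilon$-discarding test applied in the third for-loop. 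I would treat each in turn and show none of them can drop a point of $X_\epsilon^*$.

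Bisection loses nothing, since the two subboxes $B_1,B_2$ together cover $B$; hence after line 4 the union of boxes still contains $X_\epsilon^*$. The feasibility test only removes boxes that provably contain no feasible point, and $X_\epsilon^*\subseteq\Omega$, so every box containing an $\epsilon$-properly Pareto optimal solution survives the feasibility test. The essential case is (iii): I must show that if $x^*\in X_\epsilon^*$ lies in some box $B\in\mathcal{B}_k$, then $B$ is not discarded, i.e., there is no $u\in\mathcal{U}_k$ with $u\leq_\epsilon l(B)$. But this is precisely the content of Theorem~\ref{th:2} (its contrapositive): Theorem~\ref{th:2} says that if such a $u$ exists then $B$ contains no $\epsilon$-properly Pareto optimal solution, so conversely, since $x^*\in B\cap X_\epsilon^*$, no such $u$ can exist, and Algorithm~2 returns the flag $D(B)=0$, so $B$ is retained in line 22. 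Therefore $x^*\in\bigcup_{B\in\mathcal{B}_k}B$, completing the induction.

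The only point requiring mild care — and the place I expect to spend a sentence — is making sure that bisection plus the subsequent pruning steps are the \emph{only} ways boxes leave the collection, in particular that the construction of $\bar{\mathcal{B}}$ from the non-$\epsilon$-dominated lower bound set $\mathcal{L}_k$ does not itself discard boxes from $\mathcal{B}_k$. Reading the algorithm, $\bar{\mathcal{B}}$ is merely an auxiliary sub-collection used to decide where to run the MOEA for upper bounds; the actual update of $\mathcal{B}_k$ happens only in line 22 via the flag list $\mathcal{D}$, which is driven entirely by the $\epsilon$-discarding test. So the three-case analysis above is exhaustive, and the induction goes through. I would also remark that the same argument shows the monotonicity one expects: every box in $\mathcal{B}_k$ is contained in a box of $\mathcal{B}_{k-1}$, so the containments $X_\epsilon^*\subseteq\bigcup_{B\in\mathcal{B}_k}B$ form a nested chain.
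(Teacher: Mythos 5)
Your proposal is correct and rests on exactly the same core fact as the paper's proof: a box containing a point of $X_\epsilon^*$ cannot trigger the $\epsilon$-discarding test, which you obtain by citing Theorem~\ref{th:2} while the paper simply re-derives that chain $u\leq_{\epsilon} l(B)\leq_{\epsilon} F(\bar{x}^*)$ inline and contradicts Theorem~\ref{th:1}. Your explicit induction with the three-case accounting (bisection, feasibility test, discarding test) is a more careful packaging of the paper's one-shot contradiction argument, but it is not a different route.
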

\begin{proof}
Let us assume that there exists $k\in\mathbb{N}$ and an $\epsilon$-properly Pareto solution $\bar{x}^*\in X_\epsilon^*$ such that $\bar{x}^*\notin\bigcup_{B\in\mathcal{B}_k} B$, meaning that $\bar{x}^*$ is contained in a removed subbox $B$ in the pervious iteration. From the $\epsilon$-discarding test, we know that $B$ will be discarded if and only if there exists a feasible objective vector $u$ such that $u\leq_{\epsilon} l(B)$. Due to the lower bound (3.2) and Lemma 2, we then have $l(B)\leq_{\epsilon} F(\bar{x}^*)$. Thus we know that $u\leq_{\epsilon} F(\bar{x}^*)$, which contradicts the assumption that $\bar{x}^*$ is an $\epsilon$-properly Pareto solution of problem (2.1).\qed
\end{proof}

\begin{corollary}
Let $X_\epsilon^*$ be an $\epsilon$-properly Pareto solution set of problem (2.1) and $\{\mathcal{B}_k\}_{k\in\mathbb{N}}$ a sequence of box collections generated by Algorithm 3. For each $x^*\in X_\epsilon^*$ and $k\in\mathbb{N}$, these exists $B(x^*,k)\in\mathcal{B}_k$, such that $x^*\in B(x^*,k)$. Furthermore, we have $\lim\limits_{k\rightarrow \infty}d_H(X_\epsilon^*,\bigcup\limits_{x\in X_\epsilon^*}B(x,k))=0$.
\end{corollary}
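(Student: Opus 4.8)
The plan is to prove the two assertions in the corollary separately, relying on Theorem~\ref{th:4} and the bound~\eqref{E:3.3} established in Theorem~\ref{th:3}. The first assertion, that for each $x^*\in X_\epsilon^*$ and each $k\in\mathbb{N}$ there is a box $B(x^*,k)\in\mathcal{B}_k$ with $x^*\in B(x^*,k)$, is an immediate consequence of Theorem~\ref{th:4}: since $X_\epsilon^*\subseteq\bigcup_{B\in\mathcal{B}_k}B$, every $\epsilon$-properly Pareto solution $x^*$ lies in at least one box of the collection $\mathcal{B}_k$, and we simply name one such box $B(x^*,k)$. (If $x^*$ lies in several boxes, fix any choice.)

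For the limit statement, the first step is to write $S_k:=\bigcup_{x\in X_\epsilon^*}B(x,k)$ and unpack the Hausdorff distance $d_H(X_\epsilon^*,S_k)=\max\{d_h(X_\epsilon^*,S_k),\,d_h(S_k,X_\epsilon^*)\}$. The direction $d_h(X_\epsilon^*,S_k)=0$ is trivial: by construction each $x^*\in X_\epsilon^*$ satisfies $x^*\in B(x^*,k)\subseteq S_k$, so every point of $X_\epsilon^*$ has zero distance to $S_k$. Hence everything reduces to controlling $d_h(S_k,X_\epsilon^*)=\sup_{y\in S_k}\,d(y,X_\epsilon^*)$. The second step is to estimate the diameter of the relevant boxes: since the branching rule bisects perpendicularly to the direction of maximum width, after $k$ iterations every box $B\in\mathcal{B}_k$ has diameter $\|\omega(B)\|\le\omega_k$, where (as shown in the proof of Theorem~\ref{th:3}) $\omega_k\to 0$ as $k\to\infty$. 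Now take any $y\in S_k$; then $y\in B(x,k)$ for some $x\in X_\epsilon^*$, and since $x\in B(x,k)$ as well, both $y$ and $x$ lie in a common box of diameter at most $\omega_k$, so $d(y,X_\epsilon^*)\le\|y-x\|\le\omega_k$. Taking the supremum over $y\in S_k$ gives $d_h(S_k,X_\epsilon^*)\le\omega_k$, and therefore $d_H(X_\epsilon^*,S_k)\le\omega_k\to 0$, which is the claim.

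One point that requires a small remark rather than real work: the definitions of $d(a,B)$ and $d_H$ in Section~2 are stated for \emph{finite} sets, whereas $X_\epsilon^*$ and $S_k$ need not be finite. The clean fix is to read these quantities with $\inf$/$\sup$ in place of $\min$/$\max$ (the estimates above already use $\sup$), and to note that the chain of inequalities is unaffected; alternatively one observes that the $\epsilon$-discarding test only ever compares against the finite sets $\mathcal{U}_k,\mathcal{L}_k$ produced by the algorithm, so the finiteness in Section~2 is adequate for the algorithmic quantities and the corollary is a statement about limiting geometry where the $\inf$/$\sup$ reading is the natural one.

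The main obstacle, such as it is, is not analytic depth but making sure the ``common box'' argument is airtight: one must use \emph{both} memberships $x\in B(x,k)$ and $y\in B(x,k)$ for the \emph{same} box, which is exactly what the labeling $B(x,k)$ in the first part of the corollary guarantees. Everything else — the triviality of the other Hausdorff direction, and the convergence $\omega_k\to 0$ — is already in hand from Theorem~\ref{th:4} and the proof of Theorem~\ref{th:3}, so the proof is essentially a two-line assembly once the bookkeeping about finite versus infinite sets is acknowledged.
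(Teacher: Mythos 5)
Your proof is correct and follows essentially the same route as the paper's: the first claim from Theorem 4, the direction $d_h(X_\epsilon^*,S_k)=0$ by construction, and the reverse direction by bounding $d(y,X_\epsilon^*)$ through a common box of diameter at most $\omega_k\to 0$. Your added remark about reading the distances with $\inf$/$\sup$ for possibly infinite sets is a reasonable clarification but does not change the argument.
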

\begin{proof}
The first conclusion is guaranteed by Theorem 4. Next we would like to prove the second conclusion.

On the one hand, from the first conclusion, we have
\begin{align*}
d(x^*,\bigcup\limits_{x\in X_\epsilon^*}B(x,k)) =0, \quad x^*\in X^*_\epsilon,~k\in\mathbb{N},
\end{align*}
it follows $d_h(X^*_\epsilon,\bigcup\limits_{x\in X_\epsilon^*}B(x,k))=0$. On the other hand, for each $x\in\bigcup\limits_{x\in X_\epsilon^*}B(x,k)$, there exists $B(\hat{x}^*,k)\in\bigcup\limits_{x\in X_\epsilon^*}B(x,k)$, such that $x\in B(\hat{x}^*,k)$. We then have
\begin{align*}
0\leq\lim\limits_{k\rightarrow \infty}d(x,X_\epsilon^*)\leq\lim\limits_{k\rightarrow \infty}d(x,\hat{x}^*)\leq\lim\limits_{k\rightarrow \infty} w_k =0,
\end{align*}
meaning that $\lim\limits_{k\rightarrow \infty}d_h(\bigcup\limits_{x\in X_\epsilon^*}B(x,k), X_\epsilon^*)=0$. The second conclusion is proven. \qed
\end{proof}

In the next theorem, we prove that the solution set generated by Algorithm 3 is an $\varepsilon$-efficient solution set of problem (2.1).

\begin{theorem}
Assume that the midpoints of the subboxes are used to calculate the upper bounds in Algorithm 3. Let $\mathcal{X}$ be the solution set generated by Algorithm 3 and $\mathcal{L}_k$ the non-$\epsilon$-dominated lower bound set. Then $\mathcal{X}$ is an $\varepsilon$-efficient solution set of problem (2.1).
\end{theorem}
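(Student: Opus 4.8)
## Proof Proposal

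The plan is to show that every solution $\bar{x}\in\mathcal{X}$ satisfies the defining property of an $\varepsilon$-efficient solution from Definition~1, namely that no $x\in\Omega$ has $F(x)\leq F(\bar{x})-\varepsilon e$. The natural route is by contradiction combined with the error estimate already established in the proof of Theorem~\ref{th:3}. First I would fix $\bar{x}\in\mathcal{X}$; since the midpoints are used for the upper bounds, $F(\bar{x})=F(m(B))$ for some subbox $B$ whose upper bound $u=F(\bar{x})$ survived into $\mathcal{U}_k$, and by the construction relating $\mathcal{U}_k$ to $\mathcal{L}_k$ (as used in Theorem~\ref{th:3}) there is a corresponding lower bound $l(B)\in\mathcal{L}_k$ with $l(B)\leq F(\bar{x})$ by~\eqref{IE:3.1}, and moreover the componentwise estimate $F(\bar{x})-l(B)\leq \tfrac{L}{2}\|\omega(B)\|\, e$ coming directly from the Lipschitz lower bound formula~\eqref{E:3.2}.

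Next I would bring in the termination condition. Because Algorithm~\ref{alg:3} only stops when $d_h(\mathcal{U}_k,\mathcal{L}_k)\leq\varepsilon$, and by~\eqref{E:3.3} this directed Hausdorff distance equals $\omega_k\|L\|$, at termination we have $\omega_k\|L\|\leq\varepsilon$. I would then argue that this forces $\|F(\bar{x})-l(B)\|\leq\varepsilon$, hence in particular $F(\bar{x})-\varepsilon e\leq l(B)$ componentwise (using that each coordinate gap $f_i(m(B))-l_i = \tfrac{L_i}{2}\|\omega(B)\|$ is bounded by $\omega_k\|L\|\leq\varepsilon$). Now suppose, for contradiction, that there is $x\in\Omega$ with $F(x)\leq F(\bar{x})-\varepsilon e$. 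Chaining the inequalities gives $F(x)\leq F(\bar{x})-\varepsilon e\leq l(B)$, so $l(B)$ is dominated by a feasible objective vector; by Lemma~\ref{le:3} this yields $F(x)\leq_\epsilon l(B)$, so the $\epsilon$-discarding test would have removed $B$ before its midpoint image could be placed in $\mathcal{X}$ — but also, more simply, $l(B)\leq F(x')$ for any $x'\in B$ contradicts $l(B)$ being a valid lower bound that the box still "realizes" its upper bound from. The cleanest contradiction is: $F(x)\leq l(B)\leq F(\bar{x})$ shows $\bar{x}$ is not even Pareto optimal relative to the retained information, and since $\bar{x}=m(B)\in B$, this contradicts either the validity of the lower bound or the fact that $B$ passed the discarding test.

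I expect the main obstacle to be bookkeeping about \emph{which} iteration's sets are in play: the statement says "$\mathcal{X}$ is the solution set generated by Algorithm~3," but $\mathcal{X}$ and $\mathcal{X}_k$ are updated each iteration, and the $\varepsilon$-efficiency guarantee really only holds for the final $\mathcal{X}_k$ at termination, where $\omega_k\|L\|\leq\varepsilon$. So I would be careful to state at the outset that $\mathcal{X}=\mathcal{X}_{k^*}$ for the terminal index $k^*$, and that every $\bar{x}\in\mathcal{X}_{k^*}$ is the midpoint of some surviving $B\in\mathcal{B}_{k^*}$ with $l(B)\in\mathcal{L}_{k^*}$. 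A secondary subtlety is handling the possibility that the hypothetical dominating $x$ lies in some other subbox or in a discarded one; but since the lower bound property~\eqref{IE:3.1} only constrains points inside $B$, I would instead route the contradiction purely through the discarding test (Lemma~\ref{le:3} turns Pareto domination of $l(B)$ into $\epsilon$-domination, triggering removal of $B$), which does not require $x\in B$. The rest is the routine coordinatewise arithmetic converting the norm bound $\omega_k\|L\|\leq\varepsilon$ into the vector inequality $F(\bar{x})-\varepsilon e\leq l(B)$.
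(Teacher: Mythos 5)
Your overall skeleton --- work at the terminal iteration, use the termination condition $d_h(\mathcal{U}_k,\mathcal{L}_k)\leq\varepsilon$ together with \eqref{E:3.3} to turn the Lipschitz gap $F(\bar{x})-l(B)=\frac{1}{2}\|\omega(B)\|\,L$ into the componentwise inequality $F(\bar{x})-\varepsilon e\leq l(B)$, then argue by contradiction --- is the same as the paper's, and your treatment of a hypothetical dominating point $x\in B$ (chaining $F(x)\leq F(\bar{x})-\varepsilon e\leq l(B)\leq F(x)$ against the validity of the lower bound on $B$) matches the paper's first case.

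The gap is in the case $x\notin B$, which you route through the discarding test: you claim that $F(x)\leq_\epsilon l(B)$ for a feasible $x$ would have ``triggered removal of $B$.'' But Algorithm 2 discards $B$ only when some $u$ in the \emph{computed, finite} upper bound set $\mathcal{U}_k$ satisfies $u\leq_\epsilon l(B)$; the idealized phrasing ``there exists a feasible objective vector $u\in F(\Omega)$'' is not something the algorithm can check. Your hypothetical $x$ is an arbitrary point of $\Omega$ whose image need never have been evaluated, so nothing places $F(x)$ (or anything $\epsilon$-dominating $l(B)$) into $\mathcal{U}_k$, and the survival of $B$ yields no contradiction. The fallback contradictions you hedge toward do not work either: ``$\bar{x}$ is not Pareto optimal relative to the retained information'' is not contradictory, and the validity of $l(B)$ is untouched because \eqref{IE:3.1} constrains only points of $B$. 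The paper closes this case differently: it locates $x'$ in another surviving box $B'\in\mathcal{B}_k\setminus\{B\}$ and uses the Lipschitz bound to show that the shifted vector $F(x')-\frac{1}{2}\omega_k\|L\|$, tied to the lower bound of $B'$, would $\epsilon$-dominate $l(B)$, contradicting the fact that $\mathcal{L}_k$ is a non-$\epsilon$-dominated set. In other words, the contradiction must live in the lower bound set, which the algorithm maintains for every surviving box, rather than in the upper bound set, which is only a sample of $F(\Omega)$. To repair your proof you would need to switch to that lower-bound argument (or otherwise exhibit an actual element of $\mathcal{U}_k$ that $\epsilon$-dominates $l(B)$).
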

\begin{proof}
  Suppose $\tilde{x}\in \mathcal{X}$ is the midpoint of the subbox $B\in\mathcal{B}_k$ and $l\in\mathcal{L}_k$ is corresponding lower bound. According to (3.3), we know that
  \begin{align}
    \varepsilon\geq \frac{1}{2}\omega_k\|L\|>\frac{1}{2}\omega_k L_{{\rm max}},\label{IE:3.4}
  \end{align}
  where $L_{max}=\max\{L_i,i=1,\dots,m\}$. Then we can obtain a lower bound $\tilde{l}=(\tilde{l}_1,\ldots,\tilde{l}_m)^T$ whose component can be calculated by
  \begin{align*}
    \tilde{l}_i=f(\tilde{x})_i-\frac{1}{2}\omega_k L_{{\rm max}},
  \end{align*}
  and further, it is easy to see that $F(\tilde{x})-\varepsilon e <\tilde{l}\leqq l$.

  In the following we will prove $\tilde{x}$ is an $\varepsilon$-efficient solution of problem (2.1) in two aspects. On the one hand, by (3.1), we have
  \begin{align*}
    F(\tilde{x})-\varepsilon e <\tilde{l}\leqq l\leq F(x),\quad x\in B.
  \end{align*}
  Therefore, there does not exist $x\in B$ with $F(x)\leq F(\tilde{x})-\varepsilon e$.

  On the other hand, assume there exists another subbox $B'\in\mathcal{B}_k\backslash B$ and a feasible point $x'\in B'$ such that $F(x')\leq F(\tilde{x})-\varepsilon e$. By (3.2), (3.4) and Lemma 3, we have
    \begin{align*}
    F(x')-\frac{1}{2}\omega_k\|L\|\leq F(x')\leq F(\tilde{x})-\varepsilon e<\tilde{l}\leqq l\Longleftrightarrow F(x')-\frac{1}{2}\omega_k\|L\|\leq_\epsilon l,
  \end{align*}
  which is a contradiction to the fact $\mathcal{L}_k$ is a non-$\epsilon$-dominated lower bound set. Therefore, there does not exist a subbox $B'\in\mathcal{B}_k\backslash B$ and a point $x'\in B'$ with $F(x')\leq F(\tilde{x})-\varepsilon e$. Now, we prove
  the conclusion. \qed
\end{proof}

To make it easier to obtain the conclusion in Theorem 3, we consider only the situation where the upper bounds are computed by using the midpoints, rather than mini MOEA. However, in practice, the tightness of the upper bounds obtained by mini MOEA tends to be better than  the ones calculated by midpoints. Therefore, the accuracy of the solution set generated by Algorithm 3 will be higher than the accuracy of the one discussed in Theorem 5.

\section{Experimental Results}

Algorithm 3 is implemented in Python 3.8 with fundamental packages like numpy, scipy and multiprocessing, and  performs on a computer with Intel(R) Core(TM) i7-10700 CPU and 32 Gbytes RAM on operation system WINDOWS 10 PROFESSIONAL. In all experiments, we set $\epsilon=0.75$ to search for the $\epsilon$-properly Pareto solutions. For the MOEA employed in Algorithm 3, we use MOEA/D-DE \cite{ref20} with the population size 10 and 20 generations.

It is worth noting that we do not intend to compare Algorithm 3 with other algorithms. This is because the aim of most branch and bound algorithms is to search for Pareto optimal solution set rather than $\epsilon$-properly Pareto solution set. Moreover, although some scalarization-based methods can obtain $\epsilon$-properly Pareto solutions, not only they are not easily applied to some nonconvex problems, but also . Therefore, in the following we only demonstrate the effectiveness of Algorithm 3 on some test problems as well as engineering constrained optimization problems.

\begin{example}
  First, we consider three test problems.
  \begin{itemize}
    \item {\rm MOP} \cite{ref37}
        $$F(x)=
        \begin{pmatrix}
        0.5(\sqrt{1+(x_1+x_2)^2}+\sqrt{1+(x_1-x_2)^2}+x_1-x_2)+e^{-(x_1-x_2)^2}\\
        0.5(\sqrt{1+(x_1+x_2)^2}+\sqrt{1+(x_1-x_2)^2}-x_1+x_2)+e^{-(x_1-x_2)^2}
        \end{pmatrix},$$
    where $x_i\in[-3,3],\;i=1,2$. This problem has a disconnected Pareto front, which has two knees. We use $(\varepsilon,\delta)=(0.001,0.0001)$.
  \item {\rm DEB2DK} \cite{ref3}
      $$F(x)=
        \begin{pmatrix}
        g(x)r(x_1)\sin(0.5\pi x_1)\\
        g(x)r(x_1)\cos(0.5\pi x_1)
        \end{pmatrix},$$
    where
        \begin{align*}
        &g(x)=1+\frac{9}{n-1}\sum_{i=2}^{n}x_i,\\
        &r(x_1)=5+10(x_1-0.5)^2+\frac{1}{K}\cos(2K\pi x_1),\\
        &x_i\in[0,1],\;i=1,2.
        \end{align*}
        The parameter $K$ allows to control the number of knees in DEB2DK, and we set $K=4$, $n=3$ and $(\varepsilon,\delta)=(0.0015,0.00015)$.
  \item {\rm DEB3DK} \cite{ref3}
        $$F(x)=
        \begin{pmatrix}
        &g(x)r(x_1,x_2)\sin(0.5\pi x_1)\sin(0.5\pi x_2)\\
        &g(x)r(x_1,x_2)\sin(0.5\pi x_1)\cos(0.5\pi x_2)\\
        &g(x)r(x_1,x_2)\cos(0.5\pi x_1)
        \end{pmatrix},$$
        where
        \begin{align*}
        &g(x)=1+\frac{9}{n-1}\sum_{i=3}^{n}x_i,\\
        &r(x_1,x_2) = (r_1(x_1)+r_2(x_2))/2\\
        &r_i(x_i)=5+10(x_i-0.5)^2+\frac{1}{K}\cos(2K\pi x_i),\\
        &x_i\in[0,1],\;i=1,2.
        \end{align*}
        In DEB3DK, we set $K=1$, $n=3$ and $(\varepsilon,\delta)=(0.006,0.008)$.
  \end{itemize}
\end{example}

The experimental results for the three test problems are shown in Fig. 2. The Pareto fronts (blue dots) of three problems are found by Algorithm 3 with $\epsilon=0$, and the red stars are the upper bounds obtained by Algorithm 3 with $\epsilon=0.75$. An interesting observation can be made from the figure: the upper bounds obtained by Algorithm 3 usually lie in the ``bulge'' regions on Pareto fronts in Fig.~2. Based on this, we speculate that Algorithm 3 searches for the knees in the convex regions on the Pareto fronts. From the view of trade-offs, the convex knees on Pareto front are characterized by the fact that a small improvement in one objective will cause a large deterioration in the other objective \cite{ref3}. It is easy to see from Fig.~2 that $\epsilon$-properly Pareto solutions found by Algorithm 3 also satisfy this feature. Existing knee-oriented approaches need to propose different indicators \cite{ref1,ref3,ref5,ref28} to identify knees, but these indicators are not actually mathematical definitions of knees. In contrast, $\epsilon$-properly Pareto solutions not only satisfies the geometrical characterization of the knee, but also has a clear definition. Furthermore, several MOEAs \cite{ref22,ref29} also use $\epsilon$-dominance relation or some similar dominance relation to find knees, but they do not discuss the relationship between the $\epsilon$-dominance relation and $\epsilon$-properly Pareto solutions, and thus there is no further proof of global convergence.

\begin{figure}[htbp]
\centering
\subfigure[MOP]{
\includegraphics[width=0.3\textwidth]{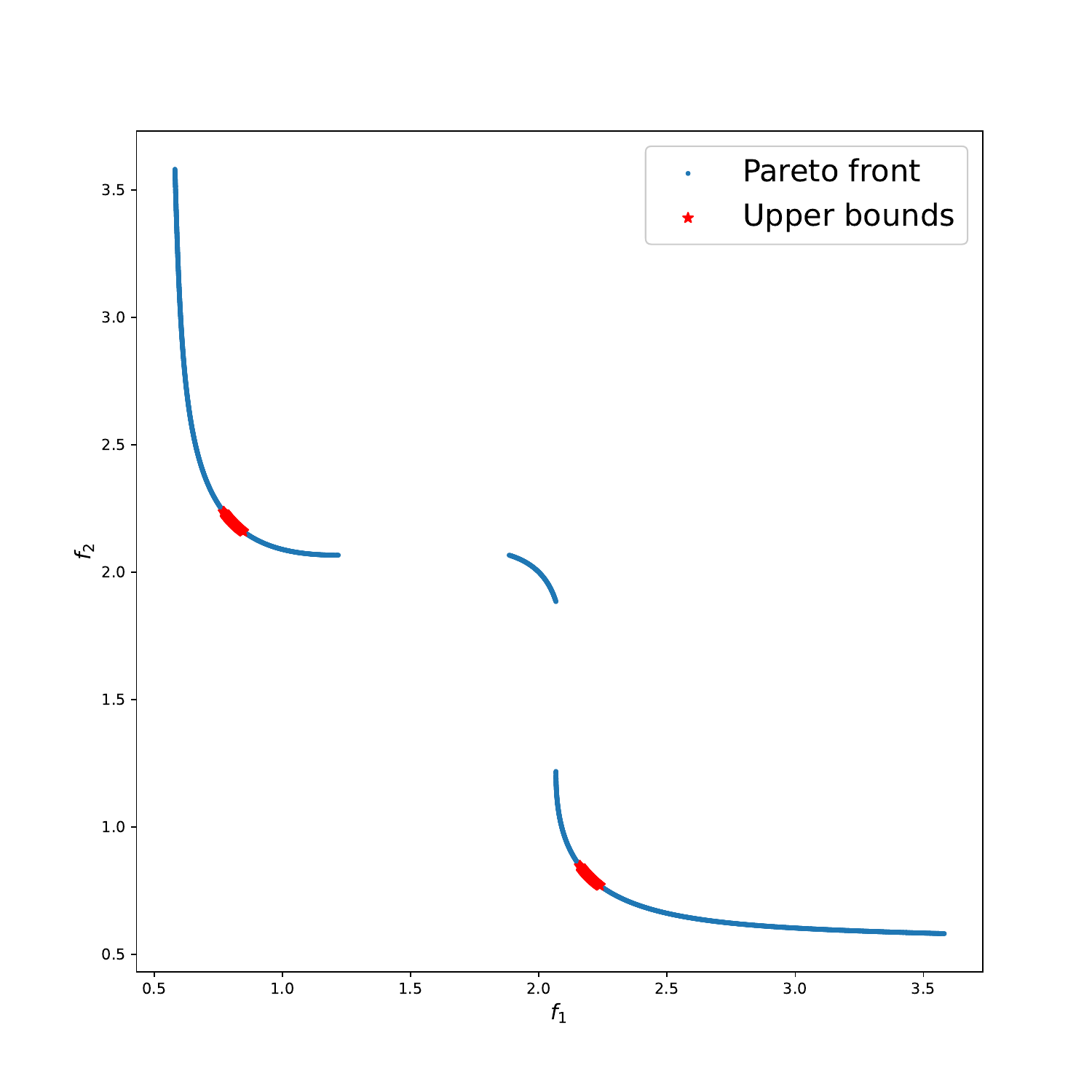}}
\subfigure[DEB2DK]{
\includegraphics[width=0.3\textwidth]{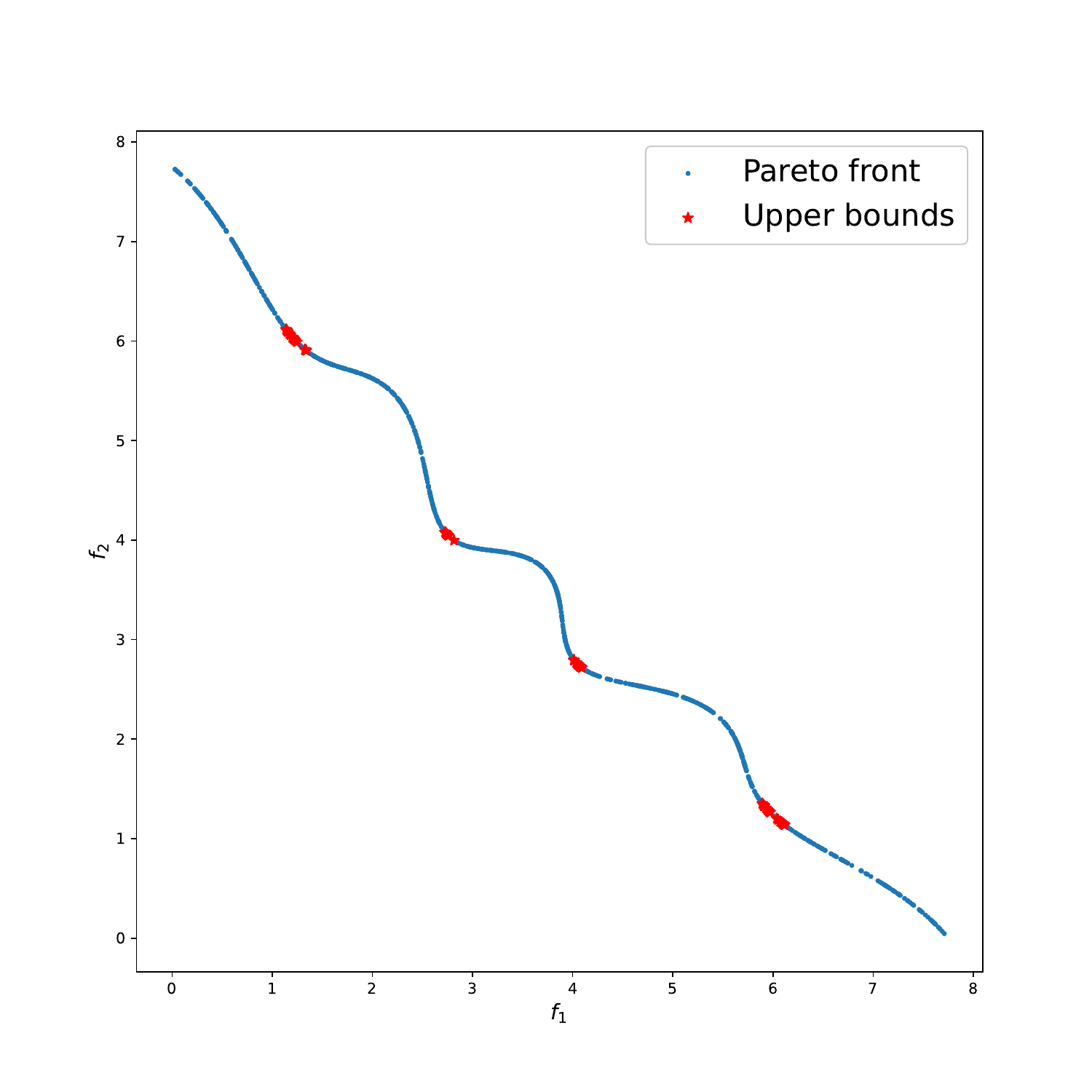}}
\subfigure[DEB3DK]{
\includegraphics[width=0.33\textwidth]{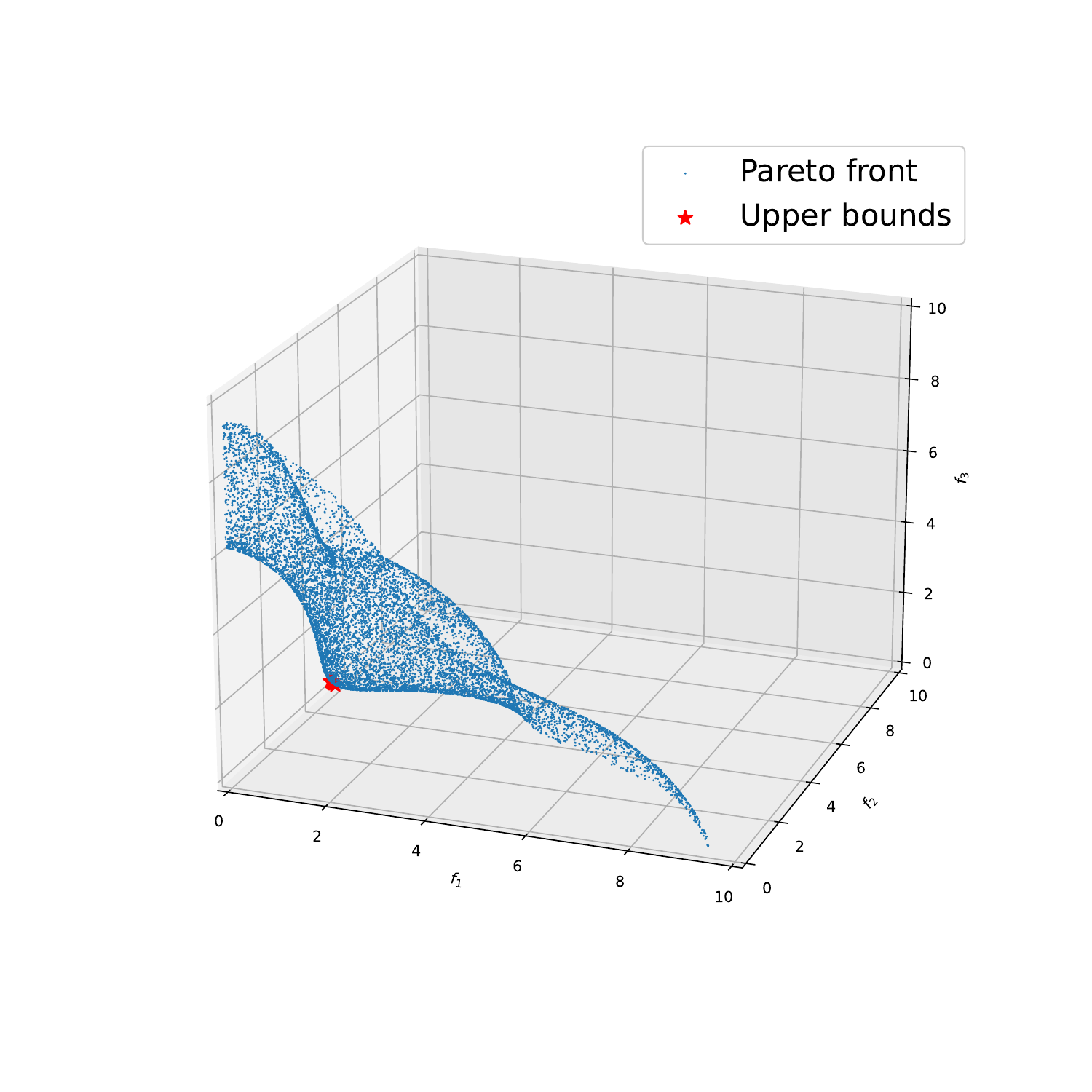}}
\caption{Results of Algorithm 3 on three test problems.}
\end{figure}


\begin{example}
  The welded beam design problem \cite{ref7,ref31} has four real-parameter variables $x=(x_1,x_2,x_3,x_4)$ and four non-linear constraints. Let us consider a manufacturing process in which minimization of the cost and minimization of the end deflection. The optimization problem is given as follows:
$$F(x)=
\begin{pmatrix}
1.10471x_1^2 x_3+ 0.04811x_2x_4(14.0+x_3)\\
2.1592/(x_2x_4^3)
\end{pmatrix}$$
subject to the constraints
\begin{align*}
&g_1(x)=13600-\tau(x)\geq0,\\
&g_2(x)=30000-\sigma(x)\geq0,\\
&g_3(x)=x_2-x_1\geq0,\\
&g_4(x)=P_c(x)-6000\geq0,\\
&0.125\leq x_1,x_2\leq 5,\\
&0.1\leq x_3,x_4\leq 10.
\end{align*}
where the stress and buckling terms are non-linear to design variables and are given as follows
\begin{align*}
&\tau(x)=\sqrt{(\tau')^2+(\tau'')^2+(x_3\tau'\tau'')/\sqrt{0.25(x_3^2+(x_1+x_4)^2)}},\\
&\tau'=\frac{6000}{\sqrt{2}x_1x_3},\\
&\tau''=\frac{6000(14+0.5x_3)\sqrt{0.25(x_3^2+(x_1+x_4)^2)}}{1.414x_1x_3(x_3^2/12+0.25(x_1+x_4)^2)},\\
&\sigma(x)=\frac{504000}{x_2x_4^2},\\
&P_c(x)=64746.022(1-0.0282346x_4)x_4x_2^3.
\end{align*}
\end{example}

Fig. \ref{fig3} shows the results on the welded beam design problem. In this problem, we set $(\varepsilon,\delta)=(0.3,0.02)$. The Pareto front are found by Algorithm 3 with $\epsilon=0$. It can be seen that most of the objective vectors on the Pareto front have very high or very low trade-offs, meaning that these solutions do not differ from the weakly Pareto optimal solutions. Therefore, if the decision maker provides preferences without sufficient a priori information, he/she is likely to obtain undesired solutions with bad trade-offs. For example, we use the reference point-based branch and bound algorithm (RBB) mentioned in the literature \cite{ref44} to solve this problem. In RBB, we use the same $\varepsilon$ and $\delta$, and assume that the decision maker provides three reference points as a priori information: (i) (4, 0.003), (ii) (20, 0.002), (iii) (32, 0.0007). The results of RBB are presented in Fig. 3b. It is not difficult to see that if the decision maker does not have a high demand for machining accuracy of the welded beam, they may not be satisfied with the solution corresponding to the second or third reference point, as he/she must pay high costs of fabrication to minimize the deflection. Therefore, the decision maker may adjust their preferences to the neighborhood of the first reference point. In contrast, Algorithm 3 directly provides the decision maker with $\epsilon$-properly Pareto optimal solutions (red stars) without any a priori information. As can be seen in Fig. 3a, these solutions take into account both the cost and the deflection, and thus are more acceptable to the decision maker.

\begin{figure}[htbp]
\centering
\subfigure[Algorithm 3]{
\includegraphics[width=0.45\textwidth]{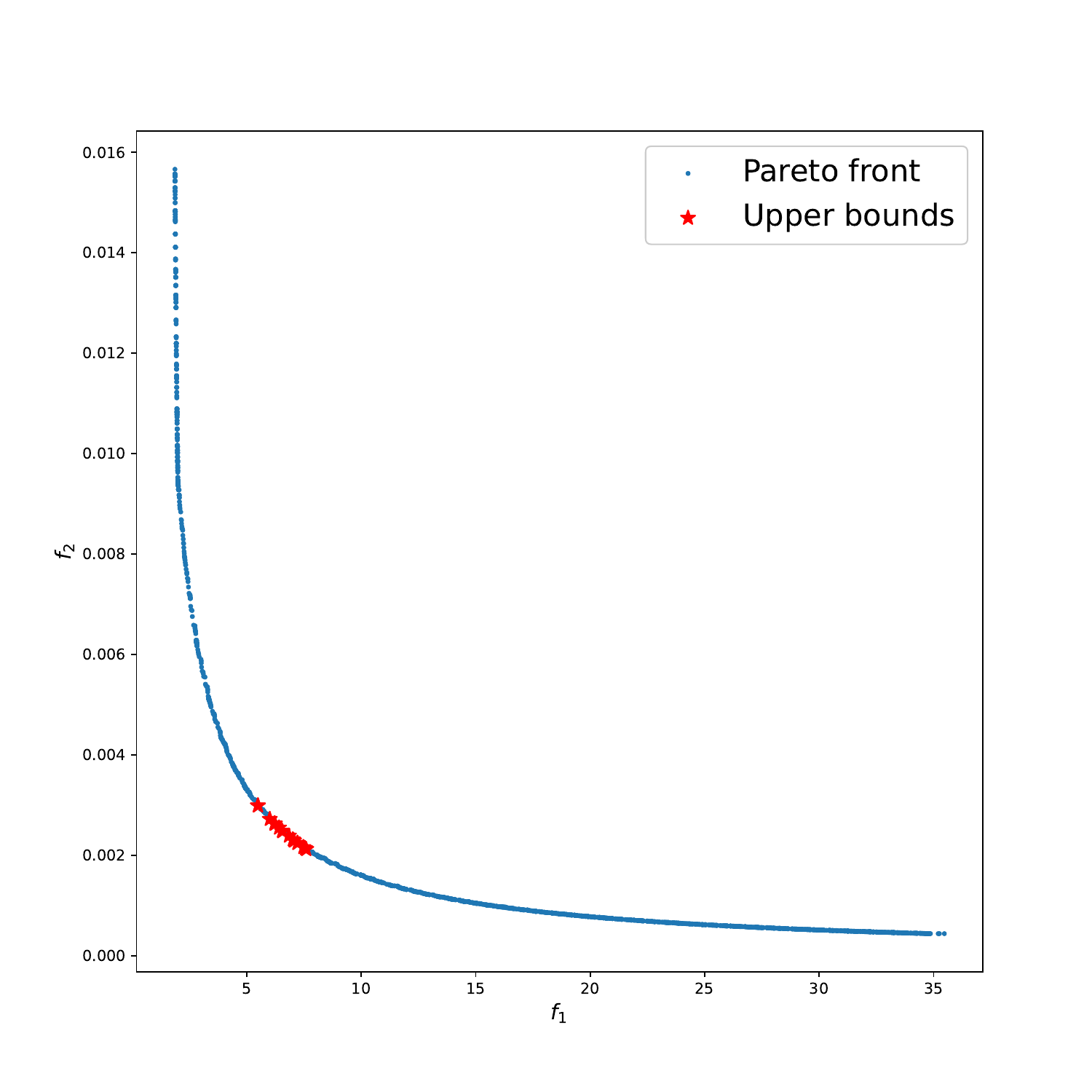}}
\subfigure[RBB\cite{ref44}]{
\includegraphics[width=0.45\textwidth]{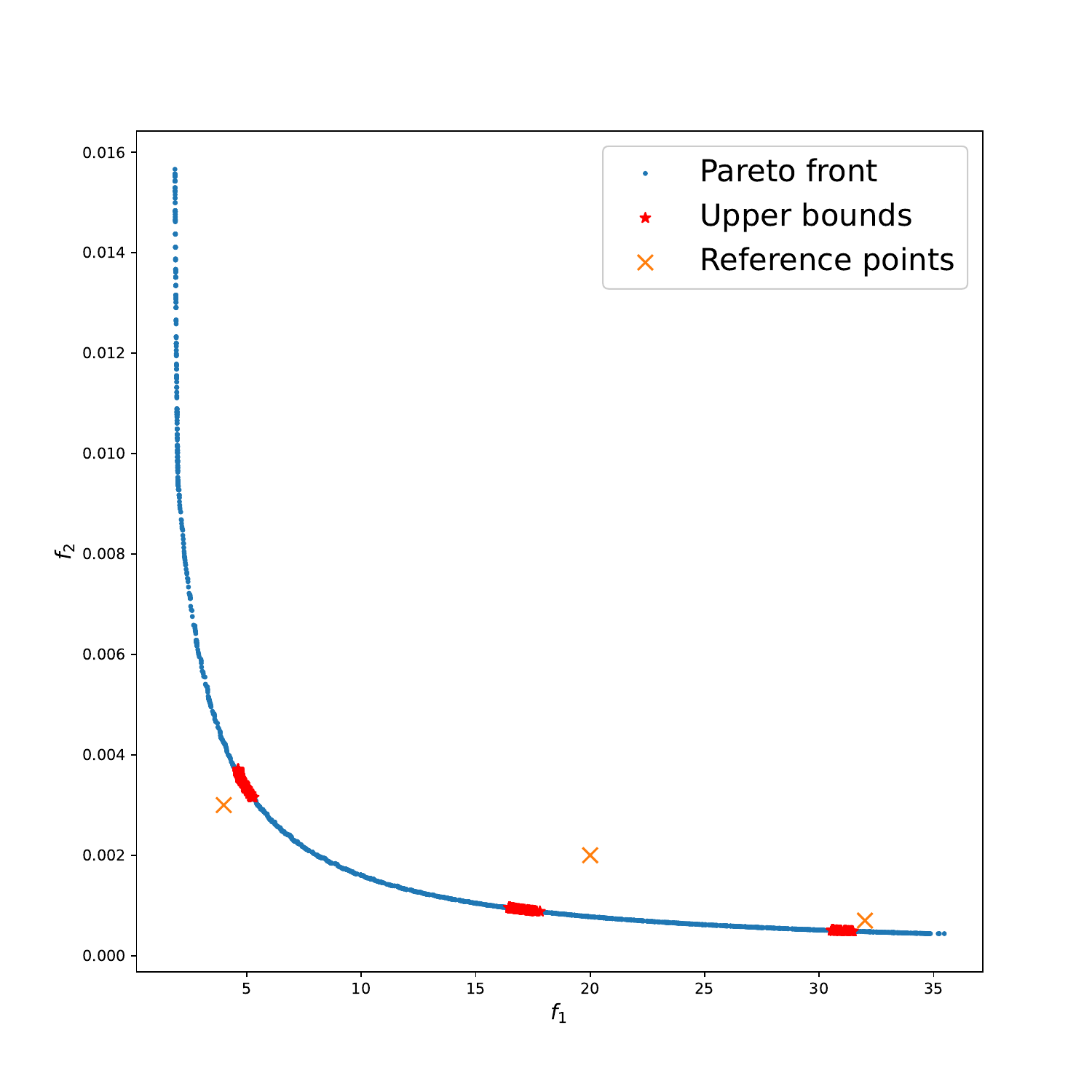}}
\caption{Result on the welded beam design problem.}
\end{figure}

\begin{example}
  The water resource planning problem \cite{ref30} involves optimal planning for a storm drainage system in an urban area. The objectives to be minimized are drainage network cost, storage facility cost, treatment facility cost, expected flood damage cost and expected economic loss due to flood.  The detailed description of the problem and constraints can be obtained from \cite{ref24}:
$$F(x)=
\begin{pmatrix}
 106780.37(x_2 + x_3)+61704.67\\
3000x_1\\
30570*0.02289.0x_2/(0.06*2289.0)^{0.65}\\
250.0*2289.0\exp(-39.75x_2+9.9x_3+2.74)\\
25.0((1.39/(x_1x_2)) + 4940.0x_3-80.0)
\end{pmatrix}$$
subject to the constraints
\begin{align*}
g_1(x)&=0.00139/(x_1x_2)+4.94x3-0.08\leq1,\\
g_2(x)&=0.000306/(x_1x_2)+1.082x_3-0.0986\leq1,\\
g_3(x)&=12.307/(x_1x_2)+49408.24x_3+4051.02\leq50000,\\
g_4(x)&=2.098/(x_1x_2)+8046.33x-696.71\leq16000,\\
g_5(x)&=2.138/(x_1x_2)+7883.39x_3-705.04\leq10000,\\
g_6(x)&=0.417(x_1x_2)+1721.26x3-136.54\leq2000,\\
g_7(x)&=0.164/(x_1x_2)+631.13x3-54.48\leq550,\\
0.01&\leq x_1\leq 0.45,~0.01\leq x_2,x_3\leq 0.1.
\end{align*}
\end{example}

\begin{figure}[htbp]%
\centering
\includegraphics[width=0.9\textwidth]{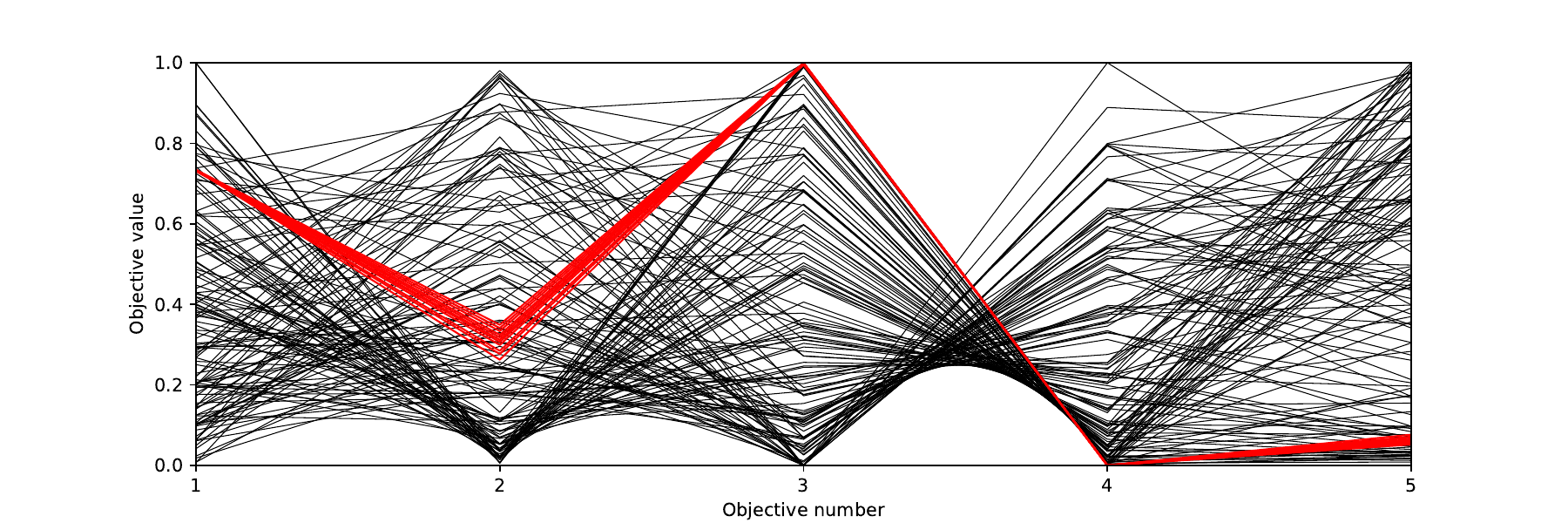}
\caption{Results on the water resource planning problem}
\end{figure}


In this problem, we set $(\varepsilon,\delta)=(0.1,0.02)$. The results of Algorithm 3 on the water resource planning problem are depicted in Fig. 4. The black lines are the value paths of the Pareto front obtained by Algorithm 3 with $\epsilon=0$. Algorithm 3 with $\epsilon=0$ obtains a large number of candidates to represent the high-dimensional Pareto front, we only select some of them as representative solutions. The value paths of the images of $\epsilon$-properly Pareto optimal solutions obtained by Algorithm 3 are plotted in red lines. It can be seen that the solutions provided by Algorithm 3 are only a small part of the entire Pareto front, so the decision maker does not confront a high level of decision pressure.

\section{Conclusion}

Many branch and bound algorithms for MOPs aim to approximate the whole Pareto optimal solution set. Their solution processes are considered resource-intensive and time-consuming. In particular, if the number of objectives is large, it is very difficult for the decision maker to select the most interesting solution from a large number of candidate solutions. Although the reference point-based branch and bound algorithm \cite{ref44} can obtain the regions of interest that match the decision maker's preferences, it is not feasible to require the decision maker to explicitly specify a priori preferences in some situation. As a result, these algorithms may not be easy to use for decision makers.

In this paper, we have argued that, without the a priori preference, the properly Pareto optimal solutions are likely to be the most relevant to the decision maker. Consequently, we have proposed a new branch and bound algorithm to find so-called $\epsilon$-properly Pareto optimal solutions. The basic idea was to replace the Pareto dominance relation in the discarding test with the $\epsilon$-dominance relation that is induced by a convex polyhedral cone. In this way, the subboxes which do not contain the $\epsilon$-properly Pareto optimal solution will be removed, resulting in a significant reduction in the number of candidate solutions. We prove that our algorithm is able to obtain a set of approximate solutions in a finite iteration. Numerical experiments confirm the effectiveness and application value of the proposed algorithm.

We are currently working on a refined version of the proposed algorithm, which allows to control the distribution of properly Pareto optimal solutions according to the skews of the Pareto front. Furthermore, it would be interesting to propose a branch and bound algorithm for vector optimization problems, and to test it on some real-world problems.


\begin{acknowledgements}
This work is supported by the Major Program of National Natural Science Foundation of China (Nos. 11991020, 11991024), the General Program of National Natural Science Foundation of China (No. 11971084), the Team Project of Innovation Leading Talent in Chongqing (No. CQYC20210309536) and the NSFC-RGC (Hong Kong) Joint Research Program (No. 12261160365).
\end{acknowledgements}

\end{document}